\tikzstyle{directed}=[postaction={decorate,decoration={markings,
\tikzstyle{reverse directed}=[postaction={decorate,decoration={markings,
\newtheorem{theorem}{Theorem}[section]
\newtheorem{lemma}[theorem]{Lemma}
\newtheorem{metalemma}[theorem]{Meta Lemma}
\newtheorem{proposition}[theorem]{Proposition}
\newtheorem{corollary}[theorem]{Corollary}
\theoremstyle{definition}
\newtheorem{definition}[theorem]{Definition}
\newtheorem{example}[theorem]{Example}
\newtheoremstyle{case}{}{}{}{}{}{:}{ }{}
\theoremstyle{case}
\newtheorem{case}{\bf{Case}}
\def\blfootnote{\xdef\@thefnmark{}\@footnotetext}
\newcommand{\bel}[1]{\begin{equation}\label{#1}}
\newcommand{\ee}{\end{equation}}
\newcommand{\LBA}{\left\{\begin{array}}
\newcommand{\EAR}{\end{array}\right.}
\def\P{{\mathbf{P}}}
\def\NP{{\mathbf{NP}}}
\def\DP{{\mathbf{DP}}}
\newcommand{\gp}[1]{{\left\langle #1 \right\rangle}}
\newcommand{\gpr}[2]{{\left\langle #1 \mid #2 \right\rangle}}
\newcommand{\rb}[1]{{\left( #1 \right)}}
\newcommand{\abs}[1]{\left|{}#1\right|}
\newcommand{\Set}[2]{\left\{\, #1 \;\middle|\; #2 \,\right\}}
\def\MN{{\mathbb{N}}}
\def\MZ{{\mathbb{Z}}}
\def\MQ{{\mathbb{Q}}}
\def\MR{{\mathbb{R}}}
\def\BS{{\mathbf{BS}}}
\DeclareMathOperator{\Aut}{{Aut}}
\DeclareMathOperator{\TPART}{3PART}
\DeclareMathOperator{\PART}{PARTITION}
\title{Quadratic equations in metabelian Baumslag-Solitar groups}
\author{Richard Mandel}
\address{Department of Mathematics, University of the Basque Country, Bilbao, Spain}
\email{mandel.richard@ehu.eus}
\author{Alexander Ushakov}
\address{Department of Mathematical Sciences, Stevens Institute of Technology, Hoboken NJ 07030}
\email{aushakov@stevens.edu}
\thanks{The first author was partially supported by the Basque Government Grant IT1483-22 and the Spanish Government grants PID2019-107444GA-I00 and PID2020-117281GB-I00.}
\begin{document}

\maketitle

\begin{abstract}
For a finitely generated group $G$, the \emph{Diophantine problem} over $G$ is the algorithmic problem of deciding whether a given equation $W(z_1,z_2,\ldots,z_k) = 1$ (perhaps restricted to a fixed subclass of equations) has a solution in $G$. In this paper, we investigate the algorithmic complexity of the Diophantine problem for the class $\mathcal{C}$ of quadratic equations over the metabelian Baumslag-Solitar groups $\BS(1,n)$. We prove that this problem is $\NP$-complete whenever $n\neq \pm 1$, and determine the algorithmic complexity for various subclasses  (orientable, nonorientable etc.) of $\mathcal{C}$. 
\end{abstract}

\blfootnote{\emph{2020 Mathematics Subject Classification.} Primary 20F16, 20F10, 68W30.}

\blfootnote{\emph{Key words and phrases.} Metabelian groups, Baumslag-Solitar groups, quadratic equations, complexity, NP-completeness.}

\section{Introduction}

The study of equations over groups dates at least as far back as 1911, when Max Dehn proposed the word and conjugacy problems. However, the major work in this area began somewhat later; in particular, with Roger Lyndon's explicit description of the solutions to an arbitrary equation in one variable over a free group in 1960 (see \cite{Lyndon:1960(2)}), and it was not until the 80s that general decidability results were obtained. Namely, it was shown by Makanin \cite{Makanin:1982} and Razborov \cite{Razborov:1987} that the problem of deciding whether a general system of equations over a free group has a solution is decidable. This problem---to decide whether a given equation over a group $G$ has a solution---is known as the \emph{Diophantine problem} for $G$, denoted in this paper as $\DP(G)$. Since Makanin and Razborov, many results have been obtained establishing the decidability and, in some cases, the algorithmic complexity of the Diophantine problem for various classes of groups and equations (see, for instance, \cite{Grigorchuk-Lysenok:1992}, \cite{Casals-Ruiz_Kazachkov:2011b}, \cite{Lysenok-Ushakov:2015}, \cite{Lysenok-Miasnikov-Ushakov:2016}, \cite{Lysenok-Ushakov:2021}). For an overview of the subject (with an emphasis on solvable groups), see \cite{Roman'kov:2012}.

One class of equations over groups that has generated much interest is the class of \emph{quadratic} equations: equations where each variable $x$ appears twice (as either $x$ or $x^{-1}$). It was observed by Culler \cite{Culler:1981} and Schupp \cite{Schupp:1980} in the early 80s that such equations have an affinity with the theory of compact surfaces (for instance, via their associated van Kampen diagrams). This geometric point of view has led to many interesting results, particularly in the realm of quadratic equations over free groups. Some examples are the description of solution sets given by Grigorchuk and Kurchanov in \cite{Grigorchuk-Kurchanov:1992}, and the proof of $\NP$-completeness of the Diophantine problem due to Kharlampovich, Lysenok, Myasnikov and Touikan \cite{Kharlampovich-Lysenok-Myasnikov-Touikan:2010}. In this paper, we employ a combination of geometric and computational techniques to study the algorithmic complexity of quadratic equations over the metabelian Baumslag-Solitar groups $\BS(1,n)$.

\subsection{Baumslag-Solitar groups}The Baumslag-Solitar groups are the one-relator groups given by the presentation $\BS(m,n) = \langle a,t \ | \ t^{-1}a^mt = a^n \rangle$. First studied by Gilbert Baumslag and Donald Solitar in 1962 \cite{Baumslag_Solitar:1962}, they have proven to be an abundant source of counterexamples and unexpected properties. In particular, these groups provided the first known examples of one-relator non-Hopfian groups (groups which admit an isomorphic proper quotient\footnote{$\BS{(m,n)}$ is non-Hopfian whenever $m,n$ have different sets of prime divisors, e.g. $\BS(2,3)$.}), and have continued to inspire much fruitful research. It is a well-known fact that the groups $\BS{(1,n)}$ are metabelian for all $n$, and that these are the only solvable Baumslag-Solitar groups. Notably, the first-order theory of $\BS(1,n)$ is undecidable for $|n| \geq 2$, which is the case for any finitely generated virtually solvable group that is not virtually abelian (this was proved by G. A. Noskov in \cite{Noskov:1984}). Thus, determining which classes of equations and inequations are decidable over the metabelian Baumslag-Solitar groups (which are not even virtually nilpotent) appears to be an interesting and important endeavor.

The Diophantine problem for quadratic equations in $\BS(1,n)$  was recently shown to be decidable by Kharlampovich, Lopez and Miasnikov (see \cite{Kharlampovich-Lopez-Miasnikov:2020}). Numerous other algorithmic problems, including discrete optimization problems as well as decision problems, have also been recently studied in the metabelian Baumslag-Solitar groups. For instance, the knapsack problem was proved to be $\NP$-complete by M. Lohrey in \cite{Lohrey:2020}, whereas the Diophantine problem for systems of exponent equations\footnote{Equations of the form $g_1^{x_{i_1}}\cdots g_k^{x_{i_k}}$, with solutions sought in $\MN^k$.} was proved to be undecidable in the same paper. Decidability of the Diophantine problem for all groups which are virtually a direct product of hyperbolic groups, such as $\BS(n,\pm n)$, was proved by Ciobanu, Holt and Rees in \cite{Ciobanu-Holt-Rees:2020}, and in \cite{Duncan-Evetts-Holt-Rees:2023} the authors demonstrate that solution sets of certain families of equations over $\BS(1,n)$ form EDT0L languages. 

\subsection{Main results and outline}

In this paper we prove the following results.

\begin{theorem}\label{th:meta-general-complete}
The Diophantine problem for quadratic equations over $\BS(1,n)$ is $\NP$-complete whenever $|n|\neq 1$, and decidable in polynomial time otherwise.
\end{theorem}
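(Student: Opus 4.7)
My plan splits into three cases matching the statement.

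\textbf{The case $n=1$.} Here $\BS(1,1)\cong\MZ^2$, so every quadratic word collapses under abelianization. A variable $z_i$ appearing with signs $\sigma_{i,1},\sigma_{i,2}\in\{\pm 1\}$ contributes $(\sigma_{i,1}+\sigma_{i,2})z_i \in \{0,\pm 2z_i\}$. The equation $W=1$ in $\MZ^2$ thus becomes a single linear Diophantine equation $\sum_i 2\alpha_i z_i = -c$, with $\alpha_i \in \{0,\pm 1\}$ and $c \in \MZ^2$. Its solvability is determined by a parity condition on the coordinates of $c$ in the standard basis, checkable by a single pass over the input.

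\textbf{$\NP$-hardness for $n\neq 1$.} I would reduce from $\PART$ (or $\TPART$), exploiting the fact that in $\BS(1,n)$ with $|n|\geq 2$, iterated conjugation gives $t^{-k}at^k = a^{n^k}$, so integers written in binary can be represented by words of logarithmic length. An instance $(s_1,\dots,s_m)$ of $\PART$ would be encoded as a quadratic equation---ideally inside a prescribed subclass (orientable or nonorientable) of controlled genus---whose solutions correspond exactly to valid partitions. Ensuring the template remains strictly quadratic and of moderate size is the main bookkeeping chore; $\NP$-hardness of the general class then follows.

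\textbf{$\NP$-membership for $n\neq 1$} is the hard direction. My plan is: (i) apply a Culler--Schupp style normalization to reduce $W=1$ to a bounded list of standard surface forms in polynomial time; (ii) substitute $z_j \mapsto g_j \in \BS(1,n)$, writing each $g_j$ as a pair $(u_j,e_j)$ with $u_j \in \MZ[1/n]$ and $e_j\in\MZ$, so that $W=1$ becomes a system of linear constraints in the $e_j$ and a single $\MZ[1/n]$-linear equation that tracks dilation by powers of $n$; (iii) prove a short-solution theorem bounding the bit-length of some solution $(u_j,e_j)$ polynomially in $|W|$. Given (iii), the $\NP$ algorithm simply guesses an encoding and verifies it using polynomial-time arithmetic in $\MZ[1/n]\rtimes\MZ$.

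The principal obstacle is step (iii): in general, quadratic equations in $\BS(1,n)$ can force $u_j$ to involve numerators and denominators with many digits, and one must use the Culler--Schupp surface structure (together with base-$n$ carry analysis) to argue that an $\NP$-sized witness always exists whenever any witness does. I would expect the proof to treat orientable and nonorientable surface forms separately, with the nonorientable case being the more delicate because of the squared-variable blocks $x^2$, which interact nontrivially with the multiplicative action of $t$ on the base group.
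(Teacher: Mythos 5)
Your overall architecture (hardness via a partition-type reduction exploiting $t^{-k}at^k=a^{n^k}$, membership via a polynomial bound on some solution) matches the paper's, and your $n=1$ case is fine. But the proposal has a genuine gap exactly where you locate it: step (iii), the short-solution theorem, is the entire content of the membership direction and you do not prove it. Saying that one "must use the Culler--Schupp surface structure together with base-$n$ carry analysis" is not an argument; the paper actually establishes the bound case by case, and the mechanism is quite different from what you anticipate. The key structural fact you are missing is that $\BS(1,n)$ has commutator width $1$ and the verbal subgroup of squares has width $2$. Consequently, for orientable equations of genus $g\ge 1$ and nonorientable equations of genus $g\ge 2$ the genus part can absorb \emph{any} element of the relevant verbal subgroup, so solvability reduces to a membership test determined entirely by the exponent sums $\sigma_a$ and $\sigma_t$ of the product of coefficients --- these cases are decidable in \emph{linear} time and need no solution bound at all. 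Your intuition that the nonorientable case is "more delicate because of the squared-variable blocks" is backwards: genus $\ge 2$ nonorientable is the easiest case, and the genuine $\NP$-complete difficulty is concentrated in the spherical case and the nonorientable genus-$1$ case.

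For those two hard cases, the paper's substitution does produce, as you predict, linear constraints on the $t$-exponents plus one equation in $\MZ[1/n]$; but the solution bound then comes from reducing to a congruence $\alpha_1 n^{x_1}+\cdots+\alpha_k n^{x_k}\equiv 0 \pmod{n^\beta-1}$ (or $\bmod\ \gcd(n^{|\beta_x|}+1,n^\beta-1)$ in the nonorientable genus-$1$ case), invoking a bounded-solution theorem for exponential equations $\sum q_i\alpha^{x_i}=q$ from a companion paper, and controlling the B\'ezout coefficients explicitly. Without either this verbal-width collapse or the exponential-equation machinery, your plan does not yield the $O(|W|^3)$ witness, so the $\NP$ upper bound remains unproven. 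Your hardness sketch is directionally correct (the paper uses $\TPART$ for $|n|\ge 2$ and $\PART$ for $n=-1$), but the encoding that forces solutions to correspond to partitions --- blocks of consecutive powers of $\alpha$ together with a uniqueness-of-representation lemma --- is the substance of that reduction and is also left unaddressed.
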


\begin{theorem}\label{th:gen-np-hard}
Let $m,n\in \MZ$ be such that $|m| \neq |n|$. Then the Diophantine problem for quadratic equations over $\BS(m,n)$ is $\NP$-hard.
\end{theorem}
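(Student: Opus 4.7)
The plan is to give a polynomial-time many-one reduction from a known $\NP$-hard problem (most naturally $\PART$) to the quadratic $\DP(\BS(m,n))$. First I would dispose of the hypotheses that already follow from Theorem~\ref{th:meta-general-complete}. If $\min(|m|,|n|)=1$, the isomorphisms $\BS(n,1)\cong\BS(1,n)$ (swap $t\mapsto t^{-1}$) and $\BS(-1,n)\cong\BS(1,-n)$ (send $a\mapsto a^{-1}$) give $\BS(m,n)\cong\BS(1,k)$ with $|k|=\max(|m|,|n|)\ge 2$ (using $|m|\neq|n|$), so $\NP$-hardness is inherited from Theorem~\ref{th:meta-general-complete}. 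The condition $mn=-1$ forces $\BS(m,n)\cong\BS(1,-1)$, again covered by Theorem~\ref{th:meta-general-complete}. This isolates the substantive case $|m|,|n|\ge 2$ with $|m|\neq|n|$.

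In the remaining case the plan is a direct reduction from $\PART$. Given an instance $(c_1,\dots,c_k)$ with target sum $C$, I would try a genus-$0$ spherical equation of the form
\[
W(z_1,\dots,z_k)\ :\ z_1 a^{c_1} z_1^{-1}\cdot z_2 a^{c_2} z_2^{-1}\cdots z_k a^{c_k} z_k^{-1}\cdot a^{-C}\ =\ 1,
\]
perhaps augmented with a single handle or a commutator $[u,a]$ to pin an auxiliary variable into the cyclic subgroup $\gp{a}$ (whose centralizer in $\BS(m,n)$ equals $\gp{a}$ when $|m|,|n|\ge 2$). The easy direction is the forward one: a sign-witness $\epsilon\in\{\pm 1\}^k$ for $\sum\epsilon_i c_i=C$ gives a group-theoretic solution by choosing each $z_i$ to be $1$ or an element whose action on $\gp{a}$ inverts the exponent; one then checks via the $\BS$-relation that the product collapses to $a^C$.

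The hard part will be the reverse direction: showing that every group-theoretic solution of $W=1$ in $\BS(m,n)$ with $|m|,|n|\ge 2$ projects to a valid $\PART$-witness. Because $\BS(m,n)$ is not metabelian and contains non-abelian free subgroups, a naive equation admits many extraneous solutions arising from cancellations in the HNN structure. I would attack this by combining two tools: (i) the action of $\BS(m,n)$ on its Bass--Serre tree, tracking the $t$-syllables of each $z_i$ along geodesic normal forms and using Britton-style reduction to constrain each conjugate $z_i a^{c_i} z_i^{-1}$ to lie in $\gp{a}$; and (ii) the projection onto the metabelianization $A\rtimes\MZ$, where $A=\MZ[t,t^{-1}]/(m-nt)$, which converts the equation, after the first step has forced integrality of exponents, into a linear arithmetic constraint of the form $\sum\epsilon_i c_i=C$ with $\epsilon_i\in\{\pm 1\}$. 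The main obstacle is the interplay in (i): ensuring the $|m|\neq|n|$ hypothesis actually rules out the free-subgroup ``wiggle room'' that would otherwise let $z_i$ carry arbitrary $t$-syllables. If the direct equation above is too permissive, a fallback is to adapt the van Kampen/surface-diagram analysis used for Theorem~\ref{th:meta-general-complete} to the Bass--Serre tree of $\BS(m,n)$, which is the natural setting in which the extra scaling factors $n/m$ become visible and can be forced, by careful choice of the $c_i$, to be trivial.
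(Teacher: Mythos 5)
Your reduction of the boundary cases is circular: in this paper the $\NP$-hardness half of Theorem~\ref{th:meta-general-complete} is itself deduced from Theorem~\ref{th:gen-np-hard} (via Corollary~\ref{co:spherical-NP-hard} and the $\BS(1,-1)$ argument), so you cannot invoke Theorem~\ref{th:meta-general-complete} to dispose of the cases $\min(|m|,|n|)=1$ or $mn=-1$. This is repairable --- the $mn=-1$ case really is a one-line $\PART$ reduction, since in $\BS(1,-1)$ conjugation by $t$ inverts $a$, so $\prod_i z_i^{-1}a^{s_i}z_i=1$ is solvable iff $S$ splits into two equal-sum halves --- but it means you must actually carry out a hardness proof for $\BS(1,n)$, $|n|\ge 2$, rather than cite it.

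The substantive gap is in your main reduction. You propose a spherical equation $\prod_i z_i a^{c_i}z_i^{-1}=a^{C}$ and a forward direction in which a sign-witness $\epsilon\in\{\pm 1\}^k$ is realized by choosing each $z_i$ so that its ``action on $\gp{a}$ inverts the exponent.'' No such element exists in $\BS(m,n)$ when $|m|,|n|\ge 2$ and $|m|\neq|n|$: the only way a conjugate $z^{-1}a^{c}z$ lands back in $\gp{a}$ is (by Britton's lemma) via powers of $t$, and conjugation by $t^{x}$ scales the exponent by $(m/n)^{x}$, which never has absolute value $1$. Sign-flipping is precisely the $mn=-1$ phenomenon and does not transfer. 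Consequently the arithmetic shadow of your equation is not $\sum_i\epsilon_i c_i=C$ but the exponential equation $\sum_i c_i\alpha^{x_i}=C$ with $\alpha=m/n$, and $\PART$ does not reduce to such equations in any obvious way. This is where the real work lies, and your proposal has no mechanism for it. The paper's route is to first reduce $\TPART$ (strongly $\NP$-complete, so the $a_i$ may be taken polynomially bounded) to algebraic equations with exponents via Proposition~\ref{pr:3-part}: the coefficients $q_{a_i}=1+\alpha^{c}+\cdots+\alpha^{(a_i-1)c}$ and the target $r$ are engineered so that, by the rigidity statement of Proposition~\ref{pr:unique_solution_exp_eq}, any solution of $\sum_i q_{a_i}\alpha^{x_i}=r$ forces the powers on the left to tile the blocks on the right and thereby read off a $3$-partition. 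Only then is this clothed as the spherical equation \eqref{eq:spherical-hard} over $\BS(m,n)$ (after clearing denominators by $n^{4ckL}$), with your point (i) --- Britton's lemma forcing $z_i=t^{x_i}$ --- handling the reverse direction. Without Proposition~\ref{pr:3-part} or an equivalent gadget, the completeness (reverse) direction of your reduction cannot be made to characterize a known $\NP$-hard problem, and the soundness (forward) direction fails as stated.
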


\begin{theorem}\label{th:solution-bound}
If a quadratic equation 
$W =  1$ over $\BS(1,n)$ has a solution,
then it has a solution of size $O(|W|^3)$.
\end{theorem}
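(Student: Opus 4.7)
The plan is to exploit the metabelian splitting $\BS(1,n) = \MZ[1/n] \rtimes \MZ$ together with a classical small-solution bound for a single linear Diophantine equation. First, I would standardize the equation: using size-preserving Nielsen-type moves for quadratic equations one may replace $W$ by an equivalent equation of size $O(|W|)$ in the orientable standard form $\prod_{i=1}^{g}[z_{2i-1},z_{2i}]\prod_{j}c_j^{w_j}=1$ or the nonorientable standard form $\prod_{i=1}^{g}z_i^2\prod_{j}c_j^{w_j}=1$. The induced map on solution sets preserves size up to a multiplicative constant, so it suffices to prove the bound in standard form.

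Next, I would separate coordinates. Every element of $\BS(1,n)$ is uniquely of the form $u\cdot t^p$ with $u\in\MZ[1/n]$ and $p\in\MZ$. Writing each variable $z_i=u_i t^{p_i}$ and pushing all $t$-powers to the right via the identity $t^p a^r t^{-p}=a^{r/n^p}$, the equation $W=1$ reduces to the conjunction of a single $\MZ$-linear equation $L(p_1,\ldots,p_k)=0$ with coefficients of magnitude $O(|W|)$, and a single $\MZ[1/n]$-linear equation $\Lambda(u_1,\ldots,u_k)=c$ whose coefficients have the form $\pm n^{s}$, where each exponent $s$ is a partial sum of the $p_i$'s and of the $t$-exponents appearing inside the constants $c_j$.

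Third, I would bound the $t$-exponents. Since $L$ is a single linear Diophantine equation in $k=O(|W|)$ unknowns with coefficients of magnitude at most $|W|$, if it has any integer solution then, by standard small-solution results (successive Bezout reduction, or Siegel-type bounds), it has one with $|p_i|=O(|W|^2)$. The quadratic hypothesis is crucial at this step: because each variable $z_i$ contributes to $\Lambda$ via only two monomials, one can replace the $p_i$'s by the small solution while preserving the solvability of $\Lambda$, absorbing the resulting discrepancy into the abelian coordinates $u_i$. Showing that this re-absorption always succeeds is the principal obstacle and is where the argument genuinely uses that the equation is quadratic rather than a system of arbitrary linear equations over the semidirect product.

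Finally, with $|p_i|=O(|W|^2)$ in hand, the structure of the standard form implies that the exponents $s$ occurring in the coefficients of $\Lambda$ are themselves $O(|W|^2)$: each commutator $[z_{2i-1},z_{2i}]$ has vanishing cumulative $t$-projection, so partial sums along the word are controlled by the individual $|p_i|$'s and by the fixed $t$-exponents in the constants. Each coefficient of $\Lambda$ therefore has word-length $O(|W|^2)$ in the generator $a$. A single $\MZ[1/n]$-linear equation of this size, when solvable, admits a solution $(u_i)$ with $|u_i|=O(|W|^2)$ in the word metric. Each $z_i$ then has word length $O(|W|^2)$ in $\BS(1,n)$, and summing over the $k=O(|W|)$ variables yields a solution of total size $O(|W|^3)$, as claimed.
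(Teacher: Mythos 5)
There is a genuine gap, and it sits exactly at the step you flag as ``the principal obstacle.'' Your reduction to a pair of equations---one $\MZ$-linear equation $L(p_1,\ldots,p_k)=0$ in the $t$-exponents and one $\MZ[1/n]$-linear equation $\Lambda(u_1,\ldots,u_k)=c$ in the abelian parts---mislocates the difficulty. In the spherical standard form $\prod_i z_i^{-1}c_iz_i=1$, the $t$-component of each conjugate equals $\beta_i=\sigma_t(c_i)$ regardless of $z_i$, so $L$ places no constraint at all on the $p_i$; the only constraint on them is that they occur as \emph{exponents of $n$} in the coefficients of $\Lambda$, i.e.\ one must solve $\alpha_1 n^{p_1}+\cdots+\alpha_k n^{p_k}\equiv 0 \pmod{n^{\beta}-1}$. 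That is an exponential Diophantine equation in the $p_i$, not a linear one, and no Bézout/Siegel-type bound applies to it. Moreover your proposed repair---shrink the $p_i$ and absorb the discrepancy into the $u_i$---fails precisely in the hard case: when all $\beta_i=0$ the conjugators contribute $v_i(n^{-\beta_i}-1)=0$ to the $\MZ[1/n]$-component, so there is nothing to absorb into, and solvability is exactly solvability of $\sum_i\alpha_i n^{p_i}=0$. (Note that the $\NP$-hardness reduction in Proposition \ref{pr:NP-hardness-DP} uses spherical equations of exactly this type; if a canonically chosen small tuple $(p_i)$ always extended to a solution, solvability would be decidable in polynomial time.)

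The paper closes this gap with a result your argument does not supply: Theorem \ref{th:Semenov-NP}, a Semenov-type bound guaranteeing that a solvable equation $q_1\alpha^{x_1}+\cdots+q_k\alpha^{x_k}=0$ has a solution with $0\le x_i\le\sum_i\log_{|\alpha|}(|q_i|+1)$, which is $O(|W|^2)$ here and yields the $O(|W|^3)$ total in Case 1 of Proposition \ref{pr:spherical-solution}; when some $\beta_i\neq 0$ the exponents are reduced modulo $\beta$ and the Bézout bound of Lemma \ref{le:bezout} controls the abelian parts. For the other standard forms the paper bypasses this machinery entirely: orientable equations of genus $\ge 1$ and nonorientable equations of genus $\ge 2$ are handled by commutator width $1$ and square-width $2$ (Lemmas \ref{le:comwidth} and \ref{le:square-verbal-width}), giving $O(|W|)$ solutions directly. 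So your overall architecture (standard form, semidirect-product coordinates, bound the $t$-exponents and then the abelian parts) matches the paper's, but the central exponent bound genuinely requires the exponential-equation theorem rather than linear Diophantine estimates, and as written your argument does not establish it.
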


We also provide a complete classification of the algorithmic complexity of the the Diophantine problem for the subclasses of \emph{orientable} (where each variable appears as both $x$ and $x^{-1}$) and \emph{nonorientable} (where at least one variable appears only as $x$ or only as $x^{-1}$) quadratic equations. Since the word problem for $\BS(1,n)$ is solvable in polynomial time (this is easily shown using Britton's lemma), Theorem \ref{th:solution-bound} provides an $\NP$-certificate in the form of the solution itself. Thus, Theorem \ref{th:meta-general-complete} is an immediate corollary of Theorems \ref{th:gen-np-hard} and \ref{th:solution-bound}. However, in Section \ref{se:metabelian-equations} we complete the proof of Theorem \ref{th:meta-general-complete} using different certificates which we find to be more convenient. These results are given in Corollaries \ref{th:meta-spherical-complete}, \ref{co:meta-nonorient-complete} and \ref{co:genus-geq2}.

Section \ref{se:prelim} contains background material and some general results on $\BS(1,n)$, and in Section \ref{se:np-hard} we prove Theorem \ref{th:gen-np-hard} (as Corollary \ref{co:spherical-NP-hard}). In what follows, $\DP$ will always refer to the Diophantine problem for the class of quadratic equations over $\BS(1,n)$ or (when explicitly stated) some subclass thereof.

Since quadratic equations over $\BS(1,1)\cong \MZ\times \MZ$ are easily seen to be trivial (i.e. equivalent to the word problem, with only trivial solutions being possible), we always assume that $n \neq 1$. In fact, we assume that $|n|\neq 1$ after Section \ref{se:polytime}, where it is proved that the quadratic Diophantine problem over $\BS(1,-1)$ is decidable in polynomial time. To prove Theorem \ref{th:gen-np-hard}, we exhibit a reduction of the 3-partition problem---a well-known $\NP$-complete problem---to $\DP(\BS(m,n))$. For both the 3-partition reduction and the complexity upper bound, we require a few results on the Diophantine problem for equations of the form 
$$q_1\alpha^{x_1}+\cdots +q_k\alpha^{x_k} = q, $$ with $q_i,q,\alpha \in \MQ$ and unknowns $x_i$. These results are stated in Section \ref{se:exp_eq}, and proofs may be found in \cite{Mandel-Ushakov:2022-b}. We assume a basic background in complexity theory and $\NP$-completeness, as may be found in \cite{Garey-Johnson:1979}.

\section{Preliminaries}\label{se:prelim}

\subsection{Quadratic equations over groups}
Let $F = F(Z)$ denote the free group on generators $Z = \{z_1,z_2,\ldots,z_k\}$. For a group $G$, an \emph{equation over $G$ with variables $z_1,\ldots,z_k$} is an expression of the form $W = 1$, where $W \in F*G$ (for convenience, we assume that all of the $z_i$ appear in $W$). If $W = c_0z_{i_1}c_1z_{i_2}\cdots z_{i_{l}}c_l$, with $c_j\in G$, then we refer to the nonidentity members of $\{c_0,\ldots,c_l\}$ as the \emph{constants} (or \emph{coefficients}) of $W$. We sometimes write $W(\bar{z})$, $W(z_1,\ldots,z_k)$ or $W(z_1,\ldots,z_k;c_1,\ldots,c_s)$ to emphasize the set of variables and constants in $W$.

A \emph{solution} to an equation $W(z_1,\ldots,z_k)=1$ over $G$ is a tuple $\bar{g} = (g_1,\ldots,g_k)\in G^k$ such that the mapping $z_i \mapsto g_i$ extends to a homomorphism $$\varphi:F*G\rightarrow G $$ which is the identity on $G$, and such that $\varphi(W) = 1$. We also write $W(g_1,\ldots,g_k)$ or $W(\bar{g})$ to denote the image of $W(z_1,\ldots,z_k)$ under $\varphi$. An equation $W=1$ is called \emph{quadratic} if each variable appears exactly twice (as either $z_i$ or $z_i^{-1}$).

\subsection{Size of an instance or solution of $\DP$}\label{se:size}

An instance of $\DP(\BS(1,n))$ is simply an element $W$ of $F(\bar{z})*\BS(1,n)$, given as a freely reduced word on the generators $z_i,$ $a$ and $t$, and the input size is defined to be the word length $|W|$ (we do not require $W$ to be in any normal form). The size of a solution $(g_1,\ldots,g_k)$ is defined as $\sum_{i=1}^k |g_i|$, where $|\cdot|$ now denotes the (geodesic) word length in $G = \BS(1,n)$. For a word $w$ on the generators $\{a,t\}$, the notation $\sigma_a(w), \sigma_t(w)$ denotes, respectively, the $a$-exponent and $t$-exponent of $w$.

\subsection{Equivalence of equations}
We describe two equations $W = 1$, $V=1$ as \emph{equivalent} if there is an automorphism $\phi\in \Aut(F*G)$ such that $\phi$ is the identity on $G$ and $\phi(W) = V$. It is a well known consequence of the classification of compact surfaces that any quadratic equation over $G$ is equivalent, via an automorphism $\phi$ computable in time $O(|W|^2)$, to an equation in exactly one of the following three standard forms (see, for instance, \cite{Grigorchuk-Kurchanov:1992} or \cite{Kharlampovich-Lysenok-Myasnikov-Touikan:2010}):

\begin{align}
W(\bar{z}) &= \prod_{j=1}^k z_j^{-1} c_j z_j=1 &k\ge 1,\label{eq:spherical}\\
W(\bar{x},\bar{y},
\bar{z}) &= \prod_{i=1}^g[x_i,y_i]\prod_{j=1}^k z_j^{-1} c_j z_j=1 &g\geq 1, k\geq 0, \label{eq:orientable}\\
W(\bar{x},\bar{z}) &= \prod_{i = 1}^g x_i^2\prod_{j=1}^k z_j^{-1} c_j z_j=1 &g \geq 1, k \geq 0.\label{eq:nonorientable}
\end{align}
The number $g$ is the \emph{genus} of the equation, and both $g$ and $k$ (the number of constants) are invariants. The standard forms are called, respectively, 
\emph{spherical} (or \emph{orientable of genus $0$}), \emph{orientable of genus $g$}, and \emph{non-orientable of genus $g$}. An equation is called spherical, orientable of genus $g
\geq 1$ or nonorientable according to its standard form. Analogously to the case of compact surfaces, if $W=1$ is an equation whose variables each occur as both $z_i$ and $z_i^{-1}$, then it is orientable (possibly of genus 0, i.e. spherical); otherwise, it is nonorientable. Assuming that $G$ is finitely generated, it can be shown that the standard form $\phi(W)=1$ is not larger than the original word length $|W|$ (more details may be found in \cite{Grigorchuk-Kurchanov:1992}). Thus, we may consider only equations in these standard forms.

\subsection{Basic facts and lemmas for $\BS(1,n)$}
\subsubsection{The semidirect product representation}
Let $\varphi:\MZ\to\Aut(\MZ[\tfrac{1}{n}])$ be the homomorphism 
which maps $1$ to $x\mapsto \frac{x}{n}$.
We make use of the well-known fact that $$\MZ[\tfrac{1}{n}] \rtimes_\varphi \MZ\cong \BS(1,n)$$ 
via the isomorphism $\psi$ defined by \begin{equation}\label{metabelian:normalform}
    (yn^{-\epsilon},b) \mapsto t^\epsilon a^y t^{-\epsilon} t^b.\end{equation} Note that $$\psi(\MZ[\tfrac{1}{n}]) = \{t^{\epsilon}a^y t^{\epsilon} \ | \ \epsilon, y \in \MZ\} = \langle\! \langle a \rangle\! \rangle,$$ and the second component of $\psi^{-1}(g)$ is equal to $\sigma_t(g)$. Where convenient, an element of $\BS(1,n)$ may be written as the corresponding element of $\mathbb Z[\tfrac{1}{n}] \rtimes \mathbb Z$ without further mention.
    
    The following lemma gives the normal form corresponding to \eqref{metabelian:normalform}, as well as some useful bounds. Lemmas \ref{le:semidirect-bound} and \ref{le:log-length} provide useful bounds on, respectively, the semidirect product representation in terms of $|W|$, and the length of a word $W$ corresponding to a given element of $\MZ[\tfrac{1}{n}]\rtimes \MZ$.  

\begin{lemma}\label{le:standard-word}
For every element $g\in \BS(1,n)$, there exist $\epsilon \in \MZ_{\geq 0}$ and $y,z\in\MZ$ such that
\begin{equation}\label{eq:standard-word}
    g=t^\epsilon a^y t^{-\epsilon}t^z.
\end{equation}
Moreover, if $W$ is a word on the generators $\{a,t\}$ representing $g$, then 
\begin{align*}
  |y| < |W||n|^{|W|} \ \ \ \mbox{and} \ \ \  \epsilon, |z| \leq |W|.
\end{align*}
\end{lemma}
\begin{proof}
   It is clear that an arbitrary word may be put into the normal form \eqref{eq:standard-word} by performing successive transformations of the form $t^{-1}a \rightarrow a^nt^{-1}$ and $at \rightarrow ta^n$. In particular, each time a positive $t$-letter is moved all the way to the left (or a negative $t$-letter to the right), the absolute value of the $a$-exponent is multiplied by a factor less than or equal to $|n|$. This shows that the bound for $y$ holds, and since the number of $t$-letters does not increase, the bounds for $\epsilon,|z|$ are immediate.
\end{proof}

\begin{lemma}\label{le:semidirect-bound}
Let $W$ be a word on the generators $\{a,t\}$, and let $(\alpha,\beta) \in \MZ[\tfrac{1}{n}] \rtimes \MZ$ be such that $W=(\alpha,\beta)$ in $\BS(1,n)$. Then we have
\begin{enumerate}[(a.)]
    \item \label{ineq:alpha}
    $|\alpha| < |n|^{|W|}$
    \item \label{ineq:beta}
    $|\beta|\leq |W|$
    \item \label{make-int}
    $n^{|W|}\alpha \in \MZ$.
\end{enumerate}
\end{lemma}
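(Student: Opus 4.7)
The plan is to establish all three bounds simultaneously by induction on $\ell := |W|$. The base case $\ell = 0$ corresponds to the empty word, giving $(\alpha, \beta) = (0, 0)$, for which every bound holds trivially.

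For the inductive step, I would decompose $W = W's$ where $|W'| = \ell - 1$ and $s \in \{a^{\pm 1}, t^{\pm 1}\}$. Writing $W' = (\alpha', \beta')$ (satisfying the inductive hypothesis) and identifying $s$ with its image in the semidirect product (so $s = (\alpha_s, \beta_s) \in \{(\pm 1, 0), (0, \pm 1)\}$), the multiplication rule produces
\[
W = W' \cdot s \;=\; \bigl(\alpha' + n^{-\beta'}\alpha_s,\; \beta' + \beta_s\bigr).
\]
Bound (ii) is then immediate from the triangle inequality, since $|\beta_s| \leq 1$.

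For (iii), I would multiply the first coordinate by $n^\ell$, obtaining $n^\ell \alpha = n \cdot (n^{\ell-1}\alpha') + n^{\ell - \beta'} \alpha_s$. The first summand is an integer by the inductive hypothesis applied to $W'$. For the second, the exponent satisfies $\ell - \beta' \geq \ell - |\beta'| \geq 1$ by the inductive bound on $|\beta'|$, so $n^{\ell - \beta'}$ is a positive integer power of $n$, and hence an integer. Since $\alpha_s \in \MZ$, the whole expression lies in $\MZ$.

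Bound (i) is the most delicate step, because $\beta'$ may be negative, making $|n|^{-\beta'}$ as large as $|n|^{\ell - 1}$. I would bound $|\alpha| \leq |\alpha'| + |\alpha_s| \cdot |n|^{-\beta'}$. When $\alpha_s = 0$ we just inherit $|\alpha| = |\alpha'| < |n|^{\ell-1} \leq |n|^\ell$. Otherwise $|\alpha_s| = 1$ and $|n|^{-\beta'} \leq |n|^{|\beta'|} \leq |n|^{\ell-1}$, so combined with the strict inductive bound $|\alpha'| < |n|^{\ell - 1}$ this yields $|\alpha| < 2|n|^{\ell-1} \leq |n|^\ell$ whenever $|n| \geq 2$. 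The subtle point here is that the strictness in (i) must be carried through the induction: it is precisely the slack $|\alpha'| < |n|^{\ell-1}$ (rather than $\leq$) that prevents the worst case $-\beta' = \ell - 1$ from saturating the bound, so the hypothesis must be stated and maintained with the strict inequality from the start.
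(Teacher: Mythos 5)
Your proof is correct and is essentially the paper's argument recast as an induction: the paper multiplies out all the letters of $W$ at once to obtain the closed form $\alpha=\sum_i \delta_i n^{-\sum_{j\le i}\epsilon_j}$ and bounds it by the geometric series $1+|n|+\cdots+|n|^{k-1}<|n|^{|W|}$, whereas you peel off one letter at a time using the same multiplication rule, which unrolls to the identical computation. Two minor remarks: your final step $2|n|^{\ell-1}\le|n|^{\ell}$ needs $|n|\ge 2$, but so does the paper's geometric-series estimate (part (i) is simply false for $|n|=1$, e.g.\ $W=a$), so you lose nothing there; and your point about carrying the strict inequality in (i) through the induction is a genuine, if small, subtlety that the closed-form version avoids having to track.
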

\begin{proof}
We may write $W$ as $t^{\epsilon_0}a^{\delta_0} t^{\epsilon_1}a^{\delta_1}\cdots t^{\epsilon_k}a^{\delta_k}$, where $\epsilon_i,\delta_i\in \{-1,0,1\}$ and $\sum_{i=0}^k |\epsilon_i|+|\delta_i| = |W|$. Therefore $$(\alpha,\beta) = \prod_{i=0}^k (0,\epsilon_i)(\delta_i,0) = \prod_{i=0}^k (\delta_in^{-\epsilon_i},\epsilon_i) = \left (\sum_{i=0}^k \delta_i n^{-\sum_{j=0}^{i-1}\epsilon_j}, \sum_{i=0}^k \epsilon_i \right),$$
whence \eqref{ineq:beta} and \eqref{make-int} follow immediately. From the right hand side, we see that
$$|\alpha| \leq \sum_{i=0}^k |\delta_i||n|^{\sum_{j=0}^{i-1}|\epsilon_j|} \leq 1+|n|+\cdots +|n|^{k-1} < |n|^{|W|}, $$
proving \eqref{ineq:alpha}.
\end{proof}

\begin{lemma}\label{le:log-length}
Assume that $|n|>1$. Let $g\in \BS(1,n)$, and suppose that $g = (\alpha n^{u},y)$ in the semidirect product notation, with $\alpha, y, u 
\in 
\MZ$. Then $g$ may be written as a word on the generators $\{a,t\}$ of length less than $$2|n|(1+\log_{|n|}|\alpha|) + 2|u| + |y| .$$
\end{lemma}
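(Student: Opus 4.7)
The plan is to write $g$ in the normal form given by Lemma \ref{le:standard-word} and then compress the $a^\alpha$ factor via a base-$|n|$ Horner expansion, exploiting the defining relation $a^n = t^{-1}at$ (equivalently $a^{n^i} = t^{-i}a\,t^i$) to turn a potentially long block of $a$'s into a short word that interleaves small exponents of $a$ with single powers of $t$.

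From the isomorphism \eqref{metabelian:normalform}, first I would write
\[
g = t^u\, a^\alpha\, t^{-u}\, t^y,
\]
which has naive length $2u + |\alpha| + |y|$; the linear dependence on $|\alpha|$ is what needs to be improved. Assuming $\alpha\neq 0$, I would choose a signed digit expansion $\alpha = \sum_{i=0}^k d_i n^i$ with $|d_i| < |n|$ and $k = \lfloor \log_{|n|}|\alpha|\rfloor$; for $n<0$ this is just the standard base-$|n|$ expansion with the alternating signs $\mathrm{sign}(n)^i$ absorbed into the digits. Iterating $a^{nm}=t^{-1}a^m t$ according to Horner's rule then yields
\[
a^\alpha = a^{d_0}\, t^{-1}\, a^{d_1}\, t^{-1}\, \cdots\, t^{-1}\, a^{d_k}\, t^k,
\]
which, read as a word in $\{a^{\pm 1},t^{\pm 1}\}$, has length at most $\sum_{i=0}^k |d_i| + 2k \leq (k+1)(|n|-1) + 2k$.

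A short elementary calculation, using $|n|\geq 2$, shows that $(k+1)(|n|-1) + 2k < 2|n|(k+1)$, and since $k+1 \leq 1 + \log_{|n|}|\alpha|$ this is bounded by $2|n|(1 + \log_{|n|}|\alpha|)$. Prepending $t^u$ and appending $t^{-u}t^y$ contributes exactly $2u + |y|$ more letters, giving the stated bound. The degenerate case $\alpha=0$ is trivial since then $g = t^y$, of length $|y|$. I do not foresee any real obstacle in this argument: the only mildly subtle point is verifying the existence of a bounded-digit base-$|n|$ expansion when $n<0$, which is routine, and the rest is a direct combination of the group relation with Horner's scheme.
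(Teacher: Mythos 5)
Your argument is correct and is essentially the paper's own proof: both write $g = t^u a^\alpha t^{-u}t^y$, expand $\alpha$ in signed base-$|n|$ digits (the paper's $\epsilon_i d_i$ playing the role of your $d_i$), telescope via $a^{n^i}=t^{-i}at^i$ into the same Horner-style word, and perform the same length estimate. No substantive differences.
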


\begin{proof}
Assume that $\alpha  
\neq 0$  (otherwise the statement is obvious), and let $L = \log_{|n|} |\alpha|$. Then $g$ may be written as
\begin{align*}
w(a,t) = t^{-u}\left(\prod_{i=0}^{\lfloor L \rfloor} t^{-i}a^{\epsilon_i d_i}t^{i}\right )t^{u}t^y &= t^{-u}(a^{\epsilon_0 d_0} t^{-1}a^{\epsilon_1 d_1} t^{-1}
\cdots t^{-1}a^{\epsilon_{\lfloor L \rfloor} d_{\lfloor L \rfloor}}t^{\lfloor L \rfloor})t^{u}t^y\\
&= t^{-u} a^{\epsilon_0 d_0} t^{-1}a^{\epsilon_1 d_1} t^{-1}
\cdots t^{-1}a^{\epsilon_{\lfloor L \rfloor} d_{\lfloor L \rfloor}}t^{\lfloor L \rfloor+u}t^y, 
\end{align*}

where $d_{\lfloor L\rfloor}\cdots d_1d_0$ is the base-$|n|$ expansion of $|\alpha|$ and $\epsilon_i 
\in \{\pm 1\}$. Hence, we have 

\begin{align*}
  |w| \leq 2|u| + 2L + d_0+\cdots+d_{\lfloor L \rfloor} + |y| &< 2L + |n|(L+1) + 2|u| + |y|\\ &
  <2|n|(L+1)+2|u|+|y| \\
  &=2|n|(1+\log_{|n|}|\alpha|) + 2|u| + |y| .  
\end{align*}
\end{proof}

It follows from Lemmas \ref{le:semidirect-bound} and \ref{le:log-length} (and their proofs) that we may efficiently (i.e., in time and space polynomial in $|W|$) transform constants encoded as words on $\{a,t\}$ into elements of $\MZ[\frac{1}{n}]\rtimes \MZ$ (and vice-versa). Hence, from the point of view of $\NP$-completeness, it makes no difference which of the two formats we use for encoding the input. However, solution size will always be considered in terms of the length of words on $\{a,t\}$.

\subsubsection{Commutator and verbal width}

Recall that the \emph{commutator width} of a group $G$
is the minimal number $l\in\MN$ such that every 
element $g$ of the derived subgroup $G'$ can be expressed as a product of at most $l$ commutators
in $G$. Similarly, if $W(z_1,\ldots,z_k)$ is a word on the letters $z_i$, then the \emph{verbal width} (or just \emph{width}) of the verbal subgroup $S = \langle \{W(g_1,\ldots,g_k)| g_i\in G\} \rangle $ is defined as the minimum $l$ such that any $g\in S$ may be expressed as a product of at most $l$ words of the form $W(g_1,\ldots,g_k)$. 

\begin{lemma}
\label{le:derived-sbgp-conditions}
A group word $w=w(a,t)$ represents an element of the derived subgroup $\BS(1,n)'$
if and only if 
\begin{equation}
\label{eq:derived-sbgp-conditions}
\sigma_t(w)=0
\ \ \mbox{ and }\ \ 
\sigma_a(w) \mbox{ is a multiple of } n-1.
\end{equation}
\end{lemma}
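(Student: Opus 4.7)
The strategy is to compute the abelianization $\BS(1,n)^{\text{ab}} = \BS(1,n)/\BS(1,n)'$ directly from the presentation and then observe that the conditions in \eqref{eq:derived-sbgp-conditions} are precisely the conditions for the image of $w$ in the abelianization to be trivial.

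First I would set up notation: let $\pi : \BS(1,n) \twoheadrightarrow \BS(1,n)^{\text{ab}}$ be the natural projection. By definition $w \in \BS(1,n)'$ if and only if $\pi(w) = 1$, so the lemma reduces to identifying $\pi(w)$ in terms of the exponent sums $\sigma_a(w)$ and $\sigma_t(w)$.

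Next I would compute $\BS(1,n)^{\text{ab}}$ explicitly. Adding the commutator relation $[a,t]=1$ to the defining relation $t^{-1}at = a^n$ transforms the latter into $a = a^n$, i.e.\ $a^{n-1}=1$. Hence
\[
\BS(1,n)^{\text{ab}} \;\cong\; \langle a, t \mid a^{n-1}=1,\ [a,t]=1\rangle \;\cong\; \MZ/(n-1)\MZ \,\oplus\, \MZ,
\]
with $a$ mapping to the generator of the first summand and $t$ to the generator of the second. Under this identification, $\pi$ sends any word $w = w(a,t)$ to the pair $\bigl(\sigma_a(w) \bmod (n-1),\ \sigma_t(w)\bigr)$, since the abelianization map just records the total exponents of each generator modulo the relations of $\BS(1,n)^{\text{ab}}$.

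Finally, $\pi(w) = 1$ in $\MZ/(n-1)\MZ \oplus \MZ$ if and only if $\sigma_t(w) = 0$ and $(n-1) \mid \sigma_a(w)$, which is exactly \eqref{eq:derived-sbgp-conditions}. I do not foresee any real obstacle here; the only mild subtlety is the convention that "multiple of $n-1$" when $n=1$ means equal to $0$, which is consistent with the fact that $\BS(1,1) \cong \MZ^2$ is abelian and has trivial derived subgroup.
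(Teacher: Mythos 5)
Your proof is correct and follows exactly the same route as the paper: both pass to the abelianization $\langle a,t \mid [a,t]=1,\ a^{n-1}=1\rangle \cong \MZ_{n-1}\times\MZ$ and note that $w$ maps to $(\sigma_a(w)\bmod(n-1),\ \sigma_t(w))$. You simply spell out the derivation of $a^{n-1}=1$ from the defining relation, which the paper leaves implicit.
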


\begin{proof} $w$ belongs to $\BS(1,n)'$
if and only if it is trivial in the abelianization $\BS(1,n)^{\text{ab}} = \gpr{a,t}{[a,t]=1,\,a^{n-1}=1} \cong \MZ_{n-1} \times \MZ$, if and only if conditions \eqref{eq:derived-sbgp-conditions}
are satisfied.
\end{proof}

\begin{lemma}\label{le:comwidth}
$\BS(1,n)$ has commutator width $1$. 
\end{lemma}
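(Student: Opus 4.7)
The plan is to work in the semidirect product representation $\MZ[\tfrac{1}{n}] \rtimes \MZ$, compute a general commutator there, and specialize the resulting formula to exhibit an explicit single-commutator expression for every element of $\BS(1,n)'$. This approach is natural because the commutator formula in a semidirect product with abelian base is linear in the first coordinate, which is precisely the coordinate that parametrizes $\BS(1,n)'$.

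My first step is to compute the general commutator in coordinates. Using the multiplication rule $(a, b)(c, d) = (a + cn^{-b}, b + d)$ and the corresponding inversion $(a, b)^{-1} = (-an^b, -b)$, a direct calculation yields
$$\bigl[(a_1, b_1),\,(a_2, b_2)\bigr] = \bigl(a_1(1 - n^{-b_2}) + a_2(n^{-b_1} - 1),\; 0\bigr).$$
In particular, every commutator has vanishing $t$-exponent, as it should.

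Second, I will reinterpret Lemma \ref{le:derived-sbgp-conditions} in semidirect product coordinates. Writing $w \in \BS(1,n)'$ as $(v, 0)$ with $v = \alpha/n^u$ for some $\alpha \in \MZ$ and $u \ge 0$, and observing that $n \equiv 1 \pmod{n-1}$ makes the residue of $\alpha$ modulo $n-1$ independent of the chosen representation of $v$, the conditions $\sigma_t(w) = 0$ and $(n-1) \mid \sigma_a(w)$ reduce to the single condition $(n-1) \mid \alpha$.

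Finally, I will specialize the commutator formula by taking $b_1 = 0$ and $b_2 = 1$. The $a_2$ summand then vanishes and the formula collapses to $[(a_1, 0), (0, 1)] = (a_1(n-1)/n,\; 0)$, so setting $a_1 = vn/(n-1)$ recovers $w$. The main (and essentially only) obstacle is then to verify that this $a_1$ actually lies in $\MZ[\tfrac{1}{n}]$; this follows from the derived subgroup characterization together with $\gcd(n, n-1) = 1$, since writing $a_1 = \alpha n^{1-u}/(n-1)$, the factor $n^{1-u}$ contributes only powers of $n$ (which are units in $\MZ[\tfrac{1}{n}]$), and $(n-1) \mid \alpha$ ensures the remaining division by $n-1$ is integral.
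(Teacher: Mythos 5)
Your proof is correct and follows essentially the same route as the paper's: both use Lemma \ref{le:derived-sbgp-conditions} to describe the derived subgroup and then exhibit each of its elements as a single explicit commutator --- the paper writes $t^pa^{k(n-1)}t^{-p}=[t,a^{-k}t^{-p}]$ directly in terms of words, while you derive the general commutator formula in $\MZ[\tfrac{1}{n}]\rtimes\MZ$ and specialize it at $b_1=0$, $b_2=1$. The only (harmless) caveat is that your formula $a_1=vn/(n-1)$ degenerates when $n=1$, but there the derived subgroup is trivial and the paper excludes that case throughout anyway.
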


\begin{proof}
Suppose that $w 
\in 
BS(1,n)'$. Then it follows from Lemmas \ref{le:standard-word}
and \ref{le:derived-sbgp-conditions} that $$w = t^p a^{k(n-1)} t^{-p}$$ for some $p\in \MZ_{
\ge 0
}$ and $k\in\MZ$, and it is easily verified that the right hand side is equal to $[t,a^{-k} t^{-p}]$ (in fact, this result can be proved more generally for any abelian-by-cyclic group).
\end{proof}

Consider the verbal subgroup 
$S=\gp{x^2 \mid x\in\BS(1,n)}$.
As a verbal subgroup, $S$ is normal in $\BS(1,n)$, and it is easy to check that it contains the derived subgroup (which is true in any group).
In particular, the quotient group
$\BS(1,n)/S$
is isomorphic to the finite abelian group
$\gpr{a,t}{a^2,\ a^{n-1},\ t^2,\ [a,t]}$
and

\begin{equation}\label{eq:verbwidth}
w=w(a,t)\in S  
\ \ \Leftrightarrow\ \ 
\gcd(2,n-1)\mid \sigma_a(w) \ \ \wedge\ \ 2\mid \sigma_t(w).  
\end{equation}

\begin{lemma}
\label{le:square-verbal-width}
The verbal width of $S$ is $2$.
\end{lemma}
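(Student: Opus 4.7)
The plan is to establish both directions $\operatorname{width}(S) \geq 2$ and $\operatorname{width}(S) \leq 2$ using the semidirect product model $\MZ[\tfrac{1}{n}] \rtimes \MZ$, with the squaring identity $(v, z)^2 = (v(1 + n^{-z}), 2z)$ as the main computational tool. I would first translate the characterization \eqref{eq:verbwidth} into this model: writing $g = (\alpha, \beta)$ with $\alpha = y/n^k$ in reduced form, the membership $g \in S$ says that $\beta$ is even and, when $n$ is odd, that $y$ is also even.

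For the upper bound I would try the explicit decomposition
\[ g = (\alpha, 2q) = (\alpha/2, 0)^2 \cdot (0, q)^2 = (\alpha/2, 0)^2 \cdot t^{2q}, \]
which is verified by a routine semidirect-product calculation. The only substantive point is to argue that $\alpha/2 \in \MZ[\tfrac{1}{n}]$, so that the first factor is a bona fide element of $\BS(1,n)$. If $n$ is even this is automatic, since $\tfrac{1}{2} = \tfrac{n/2}{n} \in \MZ[\tfrac{1}{n}]$; if $n$ is odd, the hypothesis $g \in S$ combined with $\gcd(2, n-1) = 2$ forces the numerator $y$ to be even, giving $\alpha/2 \in \MZ[\tfrac{1}{n}]$.

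For the lower bound I would propose the uniform witness $g = a^2 t^6$; it lies in $S$ because $\gcd(2, n-1)$ divides $2$ and $2$ divides $6$. If one writes $g = h^2$ for some $h = (v, z)$, the squaring formula forces $z = 3$ and $v(n^3 + 1) = 2 n^3$. Since $\gcd(n^3 + 1, n) = 1$, membership $v \in \MZ[\tfrac{1}{n}]$ would require $(n^3 + 1) \mid 2$; but for $|n| \geq 2$ with $n \neq -1$ one has $|n^3 + 1| \geq 7$, and for $n = -1$ the relation degenerates to $0 = -2$. So $g$ cannot be a single square.

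The main subtlety I anticipate is reconciling the two quite different regimes: when $n$ is even the coefficient condition in \eqref{eq:verbwidth} is vacuous but $2$ is invertible in the coefficient ring, whereas when $n$ is odd the coefficient condition is nontrivial but is precisely what is needed to halve $\alpha$ inside $\MZ[\tfrac{1}{n}]$. The same two-square decomposition works uniformly exactly because these two phenomena dovetail.
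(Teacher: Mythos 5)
Your proof is correct, and both halves check out: the multiplication rule $(v_1,z_1)(v_2,z_2)=(v_1+v_2n^{-z_1},z_1+z_2)$ gives $(\alpha/2,0)^2(0,q)^2=(\alpha,0)(0,2q)=(\alpha,2q)$, and your parity analysis of the numerator via \eqref{eq:verbwidth} is exactly what makes $\alpha/2\in\MZ[\tfrac{1}{n}]$ in the odd case. The paper's upper bound is in the same spirit --- an explicit two-square decomposition in the semidirect product, split on the parity of $n$ --- but it first reduces to $\alpha\in\MZ$ by conjugating by a power of $t$ and then takes $x=(p,0)$, $y=(qn^b,b)$ (or $(q,b)$), so that $x^2y^2=(2p+(1+n^{|b|})q,2b)$ and solvability rests on computing $\gcd(2,1+n^{|b|})$; your choice of putting the entire first coordinate into one square and using $t^{2q}$ for the other avoids that B\'ezout step entirely and is cleaner. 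The more substantive difference is that the paper's proof establishes only the upper bound and never exhibits an element of $S$ that is not a single square; your witness $a^2t^6=(2,6)$ fills that gap correctly --- the forced relation $v(n^3+1)=2n^3$ together with $\gcd(n^3+1,n)=1$ requires $(n^3+1)\mid 2$, which fails for $|n|\ge 2$ and degenerates for $n=-1$ --- and the choice of exponent $6$ rather than $2$ is genuinely necessary, since $a^2t^2$ \emph{is} a square when $n=-2$ or $n=-3$. (The one standing hypothesis to keep in view is $n\ne 1$, which the paper assumes throughout; for $n=1$ the width would be $1$.)
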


\begin{proof}
Consider any $s=(\alpha,2b) \in S$.
We may assume that $\alpha\in\MZ$ (otherwise
conjugate $s$ by an appropriate power of $t$).
For $p,q\in\MZ$ define
$$
x = (p,0) 
\ \ \mbox{ and }\ \ 
y = 
\begin{cases}
(qn^b, b) \mbox{ if }b \geq 0, \\
(q, b) \mbox{ if }b < 0,
\end{cases}
$$
so that
$$
x^2y^2 = (2p+(1+n^{|b|})q,2b). 
$$
If $n$ is even, then $\gcd(2,1+n^{|b|}) = 1$ 
and we can choose $p$ and $q$ such that 
$x^2y^2 = (\alpha,2b) = s$.
If $n$ is odd, then $\gcd(2,1+n^{|b|}) = 2$. By \eqref{eq:verbwidth} $\sigma_a(s)$ is even, so $\alpha$ is even and, as above, we may choose $p,q$ in order to obtain $x^2y^2=s$.
\end{proof}

\subsection{The 3-partition problem and the partition problem}
For a multiset $S=\{a_1,\ldots,a_{3k}\}$ of $3k$ integers, let 
$$
L_S = \frac{1}{k}\sum_{i=1}^{3k}a_i.
$$ 
The \emph{$3$-partition problem} 
(we occasionally use the abbreviation 3PART) is the problem of deciding whether a given $S=\{a_1,\ldots,a_{3k}\}$ with $L_S/4 < a_i < L_S/2$ can be partitioned into $k$ triples, each of which sums to $L_S$. This problem is known to be \emph{strongly} $\NP$-complete, which means that it remains $\NP$-complete even when the integers in $S$ 
are bounded above by a polynomial in the input size or, equivalently, if the input is represented in unary. Note that because of the restriction $L_S/4 < a_i < L_S/2$, we may assume that $S$ contains only positive integers.

The \emph{partition problem}, (or $\PART$) is a related problem to decide if a given multiset $S$
of positive integers 
can be partitioned into two subsets $S_1$ and $S_2$ 
such that the sum of the numbers in $S_1$ equals
the sum of the numbers in $S_2$.
The problem is $\NP$-complete if the numbers are given in 
binary, and polynomial time decidable if they are given in unary (i.e. \emph{weakly} $\NP$-complete). For a thorough treatment of these problems, the interested reader is referred to \cite{Garey-Johnson:1979}.

We note that the strong $\NP$-hardness of $\TPART$ is essential to prove $\NP$-hardness of $\DP(\BS(1,n))$ for $|n|>1$, because of the fact that instances of $\DP(\BS(1,n))$ are given as words on the generators, i.e. encoded in unary. On the other hand, the fact that $\PART$ is decidable in polynomial time for unary input will be used in the proof that $\DP(\BS(1,-1))\in \P$. 

\subsection{B\'{e}zout coefficients bound}

In Section \ref{se:metabelian-equations} we use the following lemma.

\begin{lemma}\label{le:bezout}
For any integers $\beta_1,\ldots,\beta_k$ (such that at least one is not zero), there exist \emph{B\'{e}zout coefficients} $s_i$ such that 
\begin{equation}\label{eq:bezout}
\sum_{i=1}^k s_i\beta_i = \gcd(\beta_1,\ldots,\beta_k),
\end{equation}
satisfying $|s_i| < |\beta_1| + \cdots + |\beta_k|.$
\end{lemma}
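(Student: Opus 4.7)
The plan is to prove this by induction on $k$. The base cases are standard: for $k=1$, take $s_1=\mathrm{sign}(\beta_1)$, while for $k=2$ I would invoke the Extended Euclidean Algorithm, which produces $s_1,s_2\in\MZ$ satisfying $s_1\beta_1+s_2\beta_2=\gcd(\beta_1,\beta_2)$ together with the classical bounds $|s_1|\le|\beta_2|/(2\gcd)\le|\beta_2|/2$ and $|s_2|\le|\beta_1|/(2\gcd)\le|\beta_1|/2$, both strictly less than $|\beta_1|+|\beta_2|$ (with a trivial check in the degenerate case $\beta_1\beta_2=0$).

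For $k\ge 3$, I would first reduce to the case in which all $\beta_i$ are nonzero (a zero coordinate permits setting $s_i=0$ and appealing to the statement for $k-1$ integers). Setting $d'=\gcd(\beta_1,\ldots,\beta_{k-1})$ and $d=\gcd(\beta_1,\ldots,\beta_k)$, the inductive hypothesis applied to $(\beta_1,\ldots,\beta_{k-1})$ yields integers $t_1,\ldots,t_{k-1}$ with $\sum_{i<k} t_i\beta_i=d'$ and $|t_i|<\sum_{j<k}|\beta_j|$, while the $k=2$ case applied to $(d',\beta_k)$ yields $u,v\in\MZ$ with $ud'+v\beta_k=d$ and $|u|\le|\beta_k|/(2d)$, $|v|\le d'/(2d)$. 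The naive tuple $(ut_1,\ldots,ut_{k-1},v)$ is a valid Bézout solution.

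The main obstacle is that the factor $|u|$ may cause $|ut_i|$ to exceed the linear bound $\sum_j|\beta_j|$; controlling this blowup is the heart of the argument. To do so I would perform a coset reduction in the kernel lattice $K=\{x\in\MZ^k:\sum x_i\beta_i=0\}$. When all $\beta_i\neq0$, $K$ is generated by vectors of the form $\frac{\beta_j}{d_{ij}}e_i-\frac{\beta_i}{d_{ij}}e_j$, where $d_{ij}=\gcd(\beta_i,\beta_j)$; adding an appropriate integer multiple of such a generator allows one to replace $s_i$ by its remainder modulo $\beta_j/d_{ij}$, forcing $|s_i|\le|\beta_j|/(2d_{ij})\le|\beta_j|/2<\sum_l|\beta_l|$ at the price of modifying $s_j$. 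I expect the simultaneous achievability of $|s_i|<\sum_j|\beta_j|$ for all $i$ to follow from a monotone potential-function argument (e.g.\ iteratively shrinking $\max_i|s_i|$ until all coordinates lie in the required cube) or, alternatively, from a direct Hermite normal form computation on the $1\times k$ row $(\beta_1,\ldots,\beta_k)$, whose associated unimodular column operations yield Bézout coefficients with entries bounded as needed.
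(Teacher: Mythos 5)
Your proposal correctly identifies the crux of the lemma --- the naive inductive combination blows up the coefficients by the factor $u$ --- and the repair you sketch uses exactly the right moves: adding kernel-lattice vectors of the form $\tfrac{\beta_j}{d_{ij}}e_i-\tfrac{\beta_i}{d_{ij}}e_j$ to reduce one coefficient modulo (a divisor of) another. This is the same elementary move the paper uses. However, the step you defer to a ``monotone potential-function argument'' is not a routine verification; it is the entire content of the lemma, and the particular potential you suggest does not work. Reducing $s_i$ modulo $\beta_j/d_{ij}$ changes $s_j$ by $q\beta_i/d_{ij}$ where $q\approx s_i d_{ij}/\beta_j$, so the partner coordinate grows by roughly $|s_i\beta_i/\beta_j|$; if $|\beta_i|>|\beta_j|$ this \emph{increases} $\max_l|s_l|$, so that potential is not monotone, and natural weighted variants such as $\max_l|s_l\beta_l|$ or $\sum_l|s_l\beta_l|$ fail for the same reason. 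The Hermite-normal-form alternative has the same problem in disguise: the transformation matrix produced by column reduction is not automatically small-entried, and bounding it requires precisely the argument you are trying to avoid. As written, the proposal therefore has a genuine gap at its decisive step.

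The missing idea is to abandon symmetry and reduce \emph{sequentially in a fixed order, leaving one coefficient unreduced}. After normalizing to $\gcd=1$ and $\beta_1>\beta_2>\cdots>\beta_k\ge 1$, the paper reduces $s_k$ modulo $\beta_{k-1}$ (dumping the excess onto $s_{k-1}$), then $s_{k-1}$ modulo $\beta_{k-2}$, and so on down to $s_2$ modulo $\beta_1$; each move only perturbs a coordinate that has not yet been reduced, so the achieved bounds $|s_i|<\beta_{i-1}$ for $i=2,\ldots,k$ survive to the end. No reduction is ever applied to $s_1$: its bound comes for free from the identity itself, since
\begin{equation*}
|s_1|=\frac{1}{\beta_1}\abs{1-\sum_{i=2}^k s_i\beta_i}
\le \frac{1}{\beta_1}+\sum_{i=2}^k|s_i|\frac{\beta_i}{\beta_1}
<1+\sum_{i=2}^k\beta_{i-1}\le \beta_1+\cdots+\beta_k.
\end{equation*}
If you replace your ``iterate until everything is small'' step with this ordered one-pass reduction plus the closing estimate on the single remaining coefficient, your argument becomes complete and essentially coincides with the paper's proof.
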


\begin{proof}
We may assume that $\gcd (\beta_1,\ldots,\beta_k) = 1$, $\beta_1 > \beta_2 > \ldots > \beta_k\ge 1$ and $k > 1$.
Notice that
$s_1,\ldots,s_{k-1},s_k \in \MZ$ satisfy \eqref{eq:bezout}
if and only if 
$s_1,\ldots,s_{k-1}+ c\beta_k,s_k-c\beta_{k-1}$ satisfy 
\eqref{eq:bezout} for any $c\in\MZ$.
Hence, we may assume that $|s_k|<\beta_{k-1}$.
Similarly, we may assume that $|s_i|<\beta_{i-1}$ for $i=2,\ldots,k$.
Now we can directly check that
\begin{align*}
|s_1| =\frac{1}{\beta_1} \abs{1 - \sum_{i=2}^k s_i\beta_i} &\leq 
\frac{1}{\beta_1}\left (1+ \sum_{i=2}^k |s_i|\beta_i \right )
< \frac{1}{\beta_1} + \sum_{i=2}^k |s_i| < \beta_1 + \cdots + \beta_{k},
\end{align*}

completing the proof.
\end{proof}

\section{A polynomial-time algorithm for $\DP(\BS(1,-1))$}\label{se:polytime}
Here, we briefly cover the special case of $\DP(\BS(1,-1))=\gpr{a,t}{t^{-1}at=a^{-1}} \simeq \MZ\ltimes \MZ$, proving that it is decidable in polynomial time.

Notice that each element of $\BS(1,-1)$ can be expressed as 
$a^m t^k$ in a unique way and the following formula holds:
\begin{equation}\label{eq:odd-even}
(a^{y}t^{x})^{-1}
\cdot a^m t^k \cdot
(a^{y}t^{x}) =
\begin{cases}
a^{-m}t^k & \mbox{ if $x$ is odd and $k$ is even,}\\
a^{2y-m}t^k & \mbox{ if $x$ is odd and $k$ is odd,}\\
a^{m}t^k & \mbox{ if $x$ is even and $k$ is even,}\\
a^{m-2y}t^k & \mbox{ if $x$ is even and $k$ is odd.}
\end{cases}    
\end{equation}

This formula immediately implies the following proposition.

\begin{proposition}\label{pr:kbg-spherical}
A spherical equation $W=\prod z_i^{-1} a^{m_i}t^{k_i}z_i=1$
has a solution if and only if $\sum k_i=0$ and either of the following conditions holds.
\begin{itemize}
\item
$\{|m_1|,\dots,|m_k|\}$ is a positive instance of the partition problem.
\item
Some $k_i$ is odd and $\sum m_i$ is even.
\end{itemize}
\end{proposition}

\begin{proposition}\label{pr:kbg-orient}
An orientable equation $W=\prod [x_j,y_j]\prod z_i^{-1}  a^{m_i}t^{k_i}z_i=1$ 
has a solution if and only if $\sum k_i=0$ and $\sum m_i$ is even.
\end{proposition}

\begin{proof}
The statement immediately follows from
$$
\BS(1,-1)' = \Set{[x,y]}{x,y\in\BS(1,-1)} = \Set{a^{2i}}{i\in \MZ}.
$$
\end{proof}

\begin{proposition}\label{pr:kbg-nonorient}
A non-orientable equation $W=x^2\prod z_i^{-1} a^{m_i}t^{k_i} z_i=1$ 
has a solution if and only if any of the following conditions holds
\begin{itemize}
\item
$\sum k_i\equiv_4 2$ and
$\{|m_1|,\dots,|m_k|\}$ is a positive instance of the partition problem.
\item
$\sum k_i\equiv_4 2$ and  
some $k_i$ is odd and
$\sum m_i$ is even.
\item
$\sum k_i\equiv_4 0$ and $\sum m_i$ is even.
\end{itemize}
\end{proposition}

\begin{proof}
The result follows immediately from \eqref{eq:odd-even} and from
$$
\Set{x^2}{x\in\BS(1,-1)} = 
\Set{t^{4j+2}}{j\in \MZ}
\cup 
\Set{a^{2i}t^{4j}}{i,j\in \MZ}.
$$
\end{proof}

\begin{proposition}\label{pr:kbg-non-genus}
A non-orientable equation $W=x_1^2\cdots x_n^2\prod z_i^{-1} a^{m_i}t^{k_i} z_i=1$,
where $n\ge 2$,
has a solution if and only if
$\sum k_i$ and $\sum m_i$ are both even.
\end{proposition}

\begin{proof}
The statement is an immediate consequence of the following formula:
$$
\gp{\Set{w^2}{w\in \BS(1,-1)}}=\Set{x^2y^2}{x,y\in\BS(1,-1)} = 
\Set{a^{2i}t^{2j}}{i,j\in \MZ}.
$$
\end{proof}

\begin{theorem}
The Diophantine problem for quadratic equations over $\BS(1,-1)$
is decidable in polynomial time. Moreover, if a quadratic equation over $\BS(1,-1)$ has a solution, then it has a solution whose size is $O(|W|)$.
\end{theorem}

\begin{proof}
It requires linear time to compute $\sum k_i$ and $\sum m_i$.
Furthermore, the partition problem for 
numbers $\{m_1,\dots,m_k\}$ given in unary can be solved in polynomial time.
Hence, Propositions \ref{pr:kbg-spherical}, \ref{pr:kbg-orient}, \ref{pr:kbg-nonorient} and \ref{pr:kbg-non-genus} give an efficient solution to the problem, and the solution bound is easily deduced from each of their proofs. 
\end{proof}
Having established that Theorem \ref{th:solution-bound} holds for $\BS(1,\pm 1)$, we henceforth assume that $|n| > 1$.

\section{Complexity lower bounds for quadratic equations over $\BS(1,n)$}
\label{se:np-hard}

In this section, we prove $\NP$-hardness for the quadratic Diophantine problem over $\BS(m,n)$ for all $m,n$ such that $|m| \neq |n|$, thereby proving Theorem \ref{th:gen-np-hard} (in a forthcoming paper, the authors prove the same result for the unimodular Baumslag-Solitar groups $\BS(n,\pm n)$). Additionally, we prove $\NP$-hardness for the special case of nonorientable equations of genus 1 over $\BS(1,n)$ with $|n|>1$.
\subsection{Algebraic equations with exponents}\label{se:exp_eq}
An \emph{algebraic equation with exponents} is an equation of the form
\begin{equation}\label{eq:alg_exp}
q_1\alpha^{x_1}+\cdots +q_k\alpha^{x_k} = q,
\end{equation}
where $\alpha\in\MR$ is algebraic,
$q_i,q,\in \MQ(\alpha)$ and unknowns $x_i$, and for which integer solutions 
are required.
It is proved in \cite{Mandel-Ushakov:2022-b} that 
the Diophantine problem for equations \eqref{eq:alg_exp},
i.e., the problem to decide if \eqref{eq:alg_exp} has a solution or not,
is $\NP$-complete for any fixed $\alpha \not \in \{-1,0,1\}$. 
We require the following results on algebraic equations with exponents 
(with $\alpha,q_1,\dots,q_k,q\in\MQ$) in order to establish an upper bound
on solution size (to prove Theorem \ref{th:solution-bound}) and to enable the reduction $\TPART \leq \DP$. The proofs of Theorem \ref{th:Semenov-NP} and Propositions \ref{pr:unique_solution_exp_eq} and \ref{pr:3-part} are given in \cite{Mandel-Ushakov:2022-b}; here, we include only the proof of Proposition \ref{pr:3-part}, because it involves a general procedure that is also invoked to prove $\NP$-hardness for the nonorientable case.

\begin{theorem}
\label{th:Semenov-NP}
Let $q_1,\ldots,q_k \in \MZ$, $\alpha\in \MQ$ and suppose that the equation
\begin{equation}\label{eq:alg_exp_int}
q_1\alpha^{x_1}+\cdots +q_k\alpha^{x_k} = 0
\end{equation}
has an integer solution. Then it has a solution $x_1,\ldots,x_k\in\MZ$ satisfying
\begin{equation}\label{eq:Semenov-bound}
0
\ \le\ 
x_1,\ldots,x_k
\ \le\ 
\sum_{i=1}^k \abs{\log_{|\alpha|} (|q_i|+1)}.
\end{equation}
\end{theorem}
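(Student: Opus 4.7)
The plan is to prove the bound by strong induction on $k$, using the shift invariance of the equation and a splitting argument. Several initial reductions apply: if some $q_i = 0$, set $x_i = 0$ and apply induction to the remaining $(k-1)$-variable equation; replacing $x_i \mapsto -x_i$ (equivalently $\alpha \mapsto \alpha^{-1}$) allows me to assume $|\alpha| > 1$; and since the equation is invariant under the diagonal shift $x_i \mapsto x_i + c$, I may normalize $\min_i x_i = 0$, reducing the task to bounding $\max_i x_i$ by $T := \sum_{i=1}^k |\log_{|\alpha|}(|q_i| + 1)|$. A further reduction merges coincident exponents: if $x_i = x_j$, the pair becomes $(q_i + q_j)\alpha^{x_i}$, and the elementary inequality $\log_{|\alpha|}(|q_i + q_j| + 1) \le \log_{|\alpha|}(|q_i| + 1) + \log_{|\alpha|}(|q_j| + 1)$ ensures the induction hypothesis yields the desired bound.

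With all $q_i \ne 0$ and all $x_i$ distinct, sort $0 = x_1 < x_2 < \cdots < x_k$ and consider two cases. If some proper nonempty $I \subsetneq \{1, \ldots, k\}$ satisfies $\sum_{i \in I} q_i \alpha^{x_i} = 0$, then also $\sum_{i \notin I} q_i \alpha^{x_i} = 0$, and the equation splits. The induction hypothesis applied to each piece yields solutions with exponents in $[0, T_I]$ and $[0, T_{I^c}]$ respectively, where $T_J := \sum_{i \in J} |\log_{|\alpha|}(|q_i| + 1)|$. Since $T_I + T_{I^c} = T$, concatenating these sub-solutions (each independently shifted to start at $0$) gives a solution of the full equation with all exponents in $[0, T]$. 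In the remaining case, no proper subsum vanishes; the triangle inequality applied to $q_k \alpha^{x_k} = -\sum_{i < k} q_i \alpha^{x_i}$ immediately yields $|\alpha|^{x_k - x_{k-1}} \le \sum_i |q_i| / |q_k| \le \prod_i(|q_i|+1)$, so $x_k - x_{k-1} \le T$. To extend this to a bound on $x_k$ itself, one writes $\alpha = p/q$ in lowest terms, multiplies through by $q^{x_k}$ to obtain an integer equation, and analyzes $p$-adic valuations: the assumption that no proper subsum vanishes forces the minimum valuation to be attained at multiple indices, which constrains how spread out the $x_i$ can be.

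The main obstacle is precisely this last case. The triangle inequality alone controls only the single gap $x_k - x_{k-1}$, and iterating it naively fails because the residual relation $\sum_{i<k} q_i \alpha^{x_i} = -q_k \alpha^{x_k}$ is not a homogeneous equation amenable to direct induction. The resolution requires a careful interplay between magnitude bounds (from the triangle inequality) and divisibility constraints (from $p$-adic and $q$-adic valuations on the cleared-denominator form); the appearance of $\sum \log_{|\alpha|}(|q_i| + 1) = \log_{|\alpha|} \prod_i(|q_i| + 1)$ in the bound reflects exactly the product that controls all possible arithmetic cancellations between terms of comparable magnitude.
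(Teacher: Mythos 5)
Your preliminary reductions are all sound: discarding zero coefficients, passing to $|\alpha|>1$, normalizing $\min_i x_i=0$, merging coincident exponents via $|q_i+q_j|+1\le(|q_i|+1)(|q_j|+1)$, and splitting the equation whenever a proper subsum vanishes. The splitting case is handled correctly by induction. But the entire content of the theorem lives in the remaining case --- all $q_i\neq 0$, all exponents distinct, no proper subsum vanishing --- and there you establish only that the single gap $x_k-x_{k-1}$ is at most $T=\sum_i\abs{\log_{|\alpha|}(|q_i|+1)}$, after which you state that the conclusion ``requires a careful interplay'' of magnitude bounds and $p$-adic valuations without actually carrying it out. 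That is an acknowledgement that the key step is missing, not a proof of it. (Note also that the present paper does not prove this theorem; it is imported from the companion paper \cite{Mandel-Ushakov:2022-b}, so there is no argument in the text you could be implicitly leaning on.)

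The gap is not merely expository, because the obvious way to finish along your lines fails quantitatively. Writing $\alpha=a/b$ in lowest terms, clearing denominators, and combining the triangle inequality with divisibility by powers of $a$ and of $b$ (using that no proper partial sum vanishes), the bound one obtains on each consecutive gap is $x_{j+1}-x_j\le\log_{|a|}\bigl(\sum_{i\le j}|q_i|\bigr)$. Summing these over $j$ gives roughly $\sum_{j=1}^{k-1}\log_{|\alpha|}\bigl(\sum_{i\le j}|q_i|\bigr)$, which in general far exceeds $\sum_i\log_{|\alpha|}(|q_i|+1)$: with all $|q_i|=1$ the iterated gap bounds total about $\log_{|\alpha|}\bigl((k-1)!\bigr)$, whereas the claimed bound is $k\log_{|\alpha|}2$. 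So reaching the additive bound $\sum_i\log_{|\alpha|}(|q_i|+1)$ requires a mechanism that charges each gap to a disjoint portion of the coefficient data (or some other global argument), and that idea is precisely what your sketch omits. As written, the proposal proves the theorem only in the degenerate situations where the equation splits or exponents collide, and leaves the irreducible case unproved.
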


\begin{proof}
The statement follows from
\cite[Theorem 3.5]{Mandel-Ushakov:2022-b}
and
\cite[Proposition 3.11]{Mandel-Ushakov:2022-b}
\end{proof}

\begin{proposition}[{{\cite[Proposition 2.3]{Mandel-Ushakov:2022-b}}}]
\label{pr:unique_solution_exp_eq}
Given $s\in \MN$, there exists $c  \in\MN$
such that for any $\alpha\in\MQ\setminus \{-1,0,1\}$ and integers $0\le p_1<p_2<\cdots<p_s$ satisfying $p_{i+1}-p_{i} \geq c$,
the equation
\begin{equation}\label{eq:exp_equality}
\alpha^{x_1}+\cdots+\alpha^{x_s}=
\alpha^{p_1}+\cdots+\alpha^{p_s}
\end{equation}
has (up to a permutation) the unique integer solution $x_i = p_i$. Furthermore, $c$ is $O(\log s)$, and may be computed efficiently in terms of $s$.
\end{proposition}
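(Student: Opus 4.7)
The plan is to argue by induction on $s$. The base case $s=1$ is immediate: since $\alpha\notin\{-1,0,1\}$ has infinite multiplicative order, $\alpha^{x_1}=\alpha^{p_1}$ forces $x_1=p_1$. For the inductive step, it suffices to show that, after sorting $x_1\le\cdots\le x_s$, we must have $x_s=p_s$; cancelling one copy of $\alpha^{p_s}$ from each side then yields a shorter equation with the same spread, and the induction hypothesis applies.

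To establish $x_s=p_s$, I would suppose for contradiction that $\{x_i\}\ne\{p_j\}$ as multisets and recast the equation as $P(\alpha)=0$, where $P(T)=\sum_k(a_k-b_k)T^k$ is a nonzero Laurent polynomial with integer coefficients: $a_k$ is the multiplicity of $k$ in $\{x_1,\ldots,x_s\}$ and $b_k\in\{0,1\}$ indicates whether $k\in\{p_1,\ldots,p_s\}$. In particular, every coefficient of $P$ lies in $\{-1,0,1,\ldots,s\}$. Two observations can then be combined to derive a contradiction once $c$ is $\Omega(\log s)$. First, by the rational root theorem applied to a shift of $P$ making it an element of $\MZ[T]$ with nonzero constant term, one obtains that if $\alpha=u/v$ in lowest terms, then $|u|,v\le s$. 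Second, since $\alpha\ne 0,\pm 1$ and $\max(|u|,v)\ge 2$, there is a prime $p\mid uv$ with $|\alpha|_p\ne 1$; specifically $|\alpha|_p\ge p$ or $|\alpha|_p\le 1/p$. In this $p$-adic absolute value, the spread $p_{i+1}-p_i\ge c$ guarantees that the RHS has a unique $p$-adic dominant term, either $\alpha^{p_s}$ or $\alpha^{p_1}$, according to whether $|\alpha|_p$ exceeds or falls below $1$. Matching this dominance on the LHS via the ultrametric inequality would then force an $x_i$ to equal the corresponding extreme $p_j$, and a symmetric argument pins down $x_s=p_s$.

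The main obstacle I anticipate is potential $p$-adic cancellation on the LHS when the multiplicity $r$ of an extreme $x_i$ is divisible by $p$; in this situation, the contribution $r\alpha^{x_i}$ loses a factor $p^{-v_p(r)}$ in the $p$-adic absolute value and could fail to dominate. The plan for handling this is to absorb the shift into the gap: since $r\le s$ and the rational root step yields $p\le s$, we have $v_p(r)\le\log_p s$, so increasing $c$ to exceed a suitable multiple of $\log_p s$ preserves the dominant-term matching even after cancellation. The resulting bound $c=O(\log s)$, together with the explicit formula obtained by tracking the constants in the above steps, also establishes the efficient computability of $c$ in terms of $s$.
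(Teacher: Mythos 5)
Your setup assembles several genuinely useful ingredients, and since the paper defers the proof of this proposition to the companion paper \cite{Mandel-Ushakov:2022-b}, I am judging the argument on its own merits. The reformulation as $P(\alpha)=0$ for a nonzero Laurent polynomial with coefficients in $\{-1,0,1,\ldots,s\}$, and the rational-root observation that this forces $|u|,v\le s$ when $\alpha=u/v$ is in lowest terms (hence supplies a prime $p\le s$ with $|\alpha|_p\ne 1$), are both correct and nontrivial. The gap is in the dominance step. Write the left-hand side as $\sum_j r_j\alpha^{y_j}$ with $y_1<\cdots<y_m$ distinct and suppose $|\alpha|_p>1$, say $v_p(\alpha)=-e$ with $e\ge 1$. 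The ultrametric inequality gives $v_p(\mathrm{LHS})\ge\min_j\bigl(v_p(r_j)-ey_j\bigr)\ge -ey_m$, and since $v_p(\mathrm{RHS})=-ep_s$ you obtain $y_m\ge p_s$ --- but only that. For the reverse inequality, even in the favorable case where the top term is uncancelled, equating valuations yields $v_p(r_m)=e(y_m-p_s)$, which is perfectly consistent with any $y_m$ up to $p_s+\lfloor\log_p s\rfloor$ provided $p^{e(y_m-p_s)}$ divides $r_m\le s$; with cancellation you get even less. Concretely, for $\alpha=1/p$ one has the exact identity $\underbrace{\alpha^{k+1}+\cdots+\alpha^{k+1}}_{p\ \text{copies}}=\alpha^{k}$, so a candidate multiset with $\max_i x_i=p_s+1$ produces \emph{exactly} the same dominant $p$-adic term as the right-hand side, and no ultrametric comparison can distinguish the two. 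Thus the claim that dominance ``forces an $x_i$ to equal the corresponding extreme $p_j$'' is false as stated, and the induction cannot get started.

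Your proposed repair --- enlarging $c$ by a multiple of $\log_p s$ --- does not address this, because $c$ constrains the gaps among the \emph{given} exponents $p_i$, whereas the obstruction is that the \emph{unknowns} $x_i$ may cluster at will near and above $p_s$ (or below $p_1$). To pin down $\max_i x_i=p_s$ exactly one needs a further input: for instance a digit-sum/carrying argument in base $\alpha$, as the paper itself uses in Lemma \ref{lem:meta-hardness} for the integer case $\alpha=n$ (there, adding $s$ powers of $n$ cannot reproduce a number of digit sum $s$ if any carrying occurs); or playing a second place of $\MQ$ (archimedean or a prime dividing $u$ rather than $v$) against the first; or a finer analysis of the whole cluster of exponents lying within $\log_p s$ of $p_s$ rather than of the single top term. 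Until one of these is carried out, the step ``$x_s=p_s$'' is unproved and the proof is incomplete.
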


\begin{proposition}\label{pr:3-part}
Fix $\alpha\in\MQ\setminus\{-1,0,1\}$. Let $S=\{a_1,\ldots,a_{3k}\}$ be an instance of 
$\TPART$, with $L = \frac{1}{k}\sum a_i$ the anticipated sum for subsets, where 
$L/4 < a_i < L/2$ for $i=1,2,\ldots,3k$ (we may assume that $L\in\MN$). Let $c\in \MN$ be the number guaranteed by Proposition \ref{pr:unique_solution_exp_eq}, corresponding to $s = Lk$. Define numbers
\begin{align*}
q_y &= 1+\alpha^c+\alpha^{2c}+\cdots+\alpha^{(y-1)c}
\ \ \ \ \ \ \mbox{ for }\ \  y\in\MN\\
r &= q_L\rb{1+\alpha^{2cL}+\alpha^{4cL}+\cdots+\alpha^{2(k-1)cL}}
\end{align*}
and the equation
\begin{equation}\label{eq:3-part}
q_{a_1}\alpha^{x_1}+\cdots+q_{a_{3k}}\alpha^{x_{3k}} = r.
\end{equation}
Then $S$ is a positive instance of $\TPART$
if and only if \eqref{eq:3-part} has a solution.
Moreover, each solution for \eqref{eq:3-part} satisfies
\begin{equation}\label{eq:3part-red-bound}
0\le x_1,\ldots,x_{3k}\le 2ckL.
\end{equation}
\end{proposition}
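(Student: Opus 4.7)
\emph{Proof plan.} For the forward direction, suppose $S = T_1 \sqcup \cdots \sqcup T_k$ is a 3-partition, with triple $T_j = \{a_{i_1(j)}, a_{i_2(j)}, a_{i_3(j)}\}$ summing to $L$. The plan is to place the three runs associated with $T_j$ consecutively inside the $j$-th ``block'' of the right-hand side by setting
$$x_{i_1(j)} = 2c(j-1)L,\quad x_{i_2(j)} = 2c(j-1)L + a_{i_1(j)}c,\quad x_{i_3(j)} = 2c(j-1)L + (a_{i_1(j)}+a_{i_2(j)})c,$$
and verifying via the telescoping identity $q_a + \alpha^{ac}q_b + \alpha^{(a+b)c}q_{c'} = q_{a+b+c'}$ (valid when $a+b+c' = L$) that the $j$-th triple contributes $\alpha^{2c(j-1)L}q_L$. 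Summing over $j$ yields $r$.

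For the reverse direction, suppose $(x_1,\ldots,x_{3k})$ is an integer solution. Expanding
$$q_{a_i}\alpha^{x_i} = \sum_{y=0}^{a_i-1}\alpha^{x_i+cy},\qquad r = \sum_{j=0}^{k-1}\sum_{y=0}^{L-1}\alpha^{2cjL+cy},$$
the equation becomes an equality of two sums, each with $\sum a_i = kL$ terms. The right-side exponents, sorted increasingly, have consecutive differences $c$ (within a block) or $c(L+1)$ (across blocks), so they satisfy the gap condition of Proposition \ref{pr:unique_solution_exp_eq} with $s = kL$. That proposition then forces the multiset of left-side exponents to coincide with the multiset of right-side exponents.

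The combinatorial heart of the argument follows: the left side decomposes into $3k$ runs, the $i$-th being the arithmetic progression $\{x_i+cy : 0\le y < a_i\}$, while the right side is a disjoint union of $k$ blocks $\{2cjL+cy : 0\le y < L\}$. A single run has span $(a_i-1)c < Lc/2$, which is strictly less than the exponent gap $c(L+1)$ separating consecutive blocks, so each run sits entirely inside one block. The runs assigned to a given block must tile it exactly, and the hypothesis $L/4 < a_i < L/2$ then forces exactly three runs per block (two would sum to less than $L$, four would sum to more than $L$); the three corresponding $a_i$'s therefore sum to $L$, yielding the 3-partition. The bound $0\le x_i\le 2ckL$ is then immediate, since each $x_i$ is the smallest exponent of a run contained in some block indexed by $j \in \{0,\ldots,k-1\}$, and uniqueness (up to the natural reorderings within each triple) follows from the rigidity of the tiling. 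The main obstacle is precisely this tiling step, which critically relies on the restriction $L/4 < a_i < L/2$ built into $\TPART$, explaining why this variant (and not, e.g., subset-sum) is the natural reduction source.
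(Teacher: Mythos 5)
Your proof is correct and follows essentially the same route as the paper: the forward direction uses the identical choice of exponents, and the reverse direction expands both sides into $kL$ powers of $\alpha$, invokes Proposition \ref{pr:unique_solution_exp_eq} to force the multisets of exponents to coincide, and then exploits $L/4 < a_i < L/2$ to recover the triples. Your ``runs tile the blocks'' packaging of the final step is a slightly cleaner global rephrasing of the paper's sequential comparison of consecutive powers, but the underlying argument is the same.
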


\begin{proof}
Suppose that $S$ is a positive instance of $\TPART$. 
Reindexing the $a_i$ and $x_i$ if necessary, we may assume that $\sum_{j=1}^3 a_{3i+j} = L$ for $i = 0,1,\ldots, k-1$. It is now easily checked that
\begin{align*}
x_{3i+1} &= 2icL \\
x_{3i+2} &= c(2iL + a_{3i+1}) \\
x_{3i+3} &= c(2iL + a_{3i+1} + a_{3i+2})
\end{align*} 
for $i = 0,1,\ldots,k-1$
satisfies \eqref{eq:3-part} and \eqref{eq:3part-red-bound}.

For the other direction, suppose that $x_1,\ldots,x_{3k}$ is a solution
of \eqref{eq:3-part}. 
By construction, the left hand side of (\ref{eq:3-part}) is 
a sum of $Lk$ powers of $\alpha$, while 
the right hand side is a sum of $Lk$ distinct powers of $\alpha^c$. In particular, the sum on the right hand side contains blocks of consecutive powers of $\alpha^c$, with gaps between $\alpha^{(2i-1)c(L-1)}$ and $\alpha^{2icL}$, $i = 1,\ldots,k-1$. Proposition  \ref{pr:unique_solution_exp_eq} implies 
that the left hand side consists of the same distinct 
powers of $\alpha$, and the proof follows from a careful comparison of these powers. First, it is clear that we must have $x_{i_1} = 0$ for exactly one $x_{i_1}$,
and
$q_{a_{i_1}}\alpha^{x_{i_1}} = 
1 + \alpha^c + \ldots + \alpha^{c(a_{i_1}-1)}$.
Since $a_{i_1} < L$ by assumption, the right hand side of \eqref{eq:3-part} contains $\alpha^{ca_{i_1}}$ and so we must have $x_{i_2} = ca_{i_1}$ for some (unique) $x_{i_2}$.
Similarly, the highest degree term of 
$q_{a_{i_1}}\alpha^{x_{i_1}} + q_{a_{i_2}}\alpha^{x_{i_2}}$ 
is $\alpha^{c(a_{i_1}+a_{i_2}-1)}$, and since $c(a_{i_1}+a_{i_2}) < cL$, we must have the next consecutive power of $\alpha^c$ on the left hand side. Hence, there must be
$x_{i_3} = c(a_{i_1}+a_{i_2})$. Finally, the highest power
of $q_{a_{i_1}}\alpha^{x_{i_1}} + q_{a_{i_2}}\alpha^{x_{i_2}} + q_{a_{i_3}}\alpha^{x_{i_3}}$ is $\alpha^{c(a_{i_1}+a_{i_2} + a_{i_3}-1)}$, which must be the last term in the first block of consecutive powers of $\alpha^c$ (since for any $a_l$, we have $cL < c(a_{i_1}+a_{i_2}+a_{i_3} + a_l)$). This implies that $c(a_{i_1}+a_{i_2}+a_{i_3}) = cL$, and the next largest $x_i$ is equal to $2cL$. It is clear that this process may be continued to show that $S$ is a positive instance, and that $x_1,
\ldots,x_{3k}$ satisfies \eqref{eq:3part-red-bound}. 
\end{proof}

\subsection{$\NP$-hardness for $\DP(\BS(m,n))$ where $|m|\ne |n|$}
\label{sec:NPhard}

Let us fix $m,n \in \mathbb{Z}\setminus \{0\}$ such that $|m| \neq |n|$, and let $\alpha = \frac{m}{n}$. Let $S=\{a_1,\ldots,a_{3k}\}$
be an instance of $\TPART$ with $L/4 < a_i < L/2$,  where $L$ is the anticipated sum. 
Let $c\in\MN$ be defined as in Proposition \ref{pr:3-part}.
Construct the equation \eqref{eq:3-part} corresponding to $S$, with integer coefficients $q_{a_1},\dots,q_{a_{3k}}$ and $r$
as defined Proposition \ref{pr:3-part}.
Multiply the coefficients by $n^{4ckL}$ to get
$$
b = rn^{4ckL}
\ \ \mbox{ and }\ \ 
b_i = q_{a_i}n^{4ckL}
\ \ \mbox{ for } i=1,\dots,3k.
$$
Finally, consider the spherical equation 
\begin{equation}\label{eq:spherical-hard}
z_1^{-1} a^{b_1} z_1 \cdots z_{3k}^{-1} a^{b_{3k}} z_{3k} = a^{b}
\end{equation}
over the group $\BS(m,n)$.

\begin{proposition}\label{pr:NP-hardness-DP}
$S$ is a positive instance of $\TPART$
if and only if 
\eqref{eq:spherical-hard}
has a solution.
\end{proposition}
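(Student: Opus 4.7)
The plan is to set up a homomorphism from $\BS(m,n)$ to a metabelian semidirect product in which equation \eqref{eq:spherical-hard} becomes precisely equation \eqref{eq:3-part} (up to a sign on the exponents), and then read off the desired equivalence from Proposition \ref{pr:3-part}. Since $|m|\ne|n|$ and $m,n\ne 0$, we have $\alpha = m/n \in \MQ\setminus\{-1,0,1\}$. Consider the semidirect product $H=\MQ\rtimes_\alpha \MZ$ with multiplication $(x,y)(x',y')=(x+\alpha^y x',\,y+y')$, and define $\phi:\BS(m,n)\to H$ by $a\mapsto(1,0)$, $t\mapsto(0,1)$. A short calculation verifies that $\phi$ respects the defining relation $t^{-1}a^m t = a^n$ (it reduces to $\alpha^{-1}m=n$) and, more importantly, that for any $z\in\BS(m,n)$ and $k\in\MZ$, $\phi(z^{-1}a^k z)=(\alpha^{-\sigma_t(z)}k,\,0)$, so this image depends on $z$ only through $\sigma_t(z)$.

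For the forward direction, assume that $S$ is a positive instance of $\TPART$. By Proposition \ref{pr:3-part}, equation \eqref{eq:3-part} admits an integer solution $(x_1,\ldots,x_{3k})$ with $0\le x_i\le 2ckL$. I would set $z_i = t^{-x_i}$ and argue directly inside $\BS(m,n)$ that $z_i^{-1}a^{b_i}z_i = t^{x_i}a^{b_i}t^{-x_i} = a^{b_i\alpha^{x_i}}$. The idea is to iterate the relation $ta^{cn}t^{-1}=a^{cm}$ a total of $x_i$ times; this requires $n^{x_i}$ to divide $b_i$, which is ensured by the padding factor $n^{4ckL}$ built into $b_i = q_{a_i}n^{4ckL}$ (since $x_i\le 2ckL$). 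Multiplying across $i$ and using commutativity of $\langle a\rangle$, the left-hand side of \eqref{eq:spherical-hard} becomes $a^{\sum_i b_i\alpha^{x_i}} = a^{n^{4ckL}r} = a^b$, as required.

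For the reverse direction, given a solution $(z_1,\ldots,z_{3k})$ of \eqref{eq:spherical-hard} in $\BS(m,n)$, let $y_i=\sigma_t(z_i)$ and apply $\phi$ to both sides. Reading off the first coordinate yields $\sum_i\alpha^{-y_i}b_i = b$, equivalently $\sum_i q_{a_i}\alpha^{-y_i}=r$. Setting $x_i=-y_i$ gives an integer solution of \eqref{eq:3-part}, whose solvability by Proposition \ref{pr:3-part} is equivalent to $S$ being a positive instance of $\TPART$. The principal obstacle is the explicit iterated computation inside $\BS(m,n)$ for the forward direction: one must verify that the partial products $t\bigl(a^{b_i(m/n)^j}\bigr)t^{-1}$ for $0\le j<x_i$ remain honest integer powers of $a$ throughout, which is precisely why the coefficients $b_i$ were inflated by the factor $n^{4ckL}$ in the construction.
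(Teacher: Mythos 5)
Your proof is correct, and the forward direction is essentially the paper's argument: take the bounded solution of \eqref{eq:3-part} from Proposition \ref{pr:3-part}, substitute powers of $t$ for the $z_i$, and use the padding factor $n^{4ckL}$ to guarantee the divisibility needed to push $t$-letters past the $a$-powers inside $\BS(m,n)$. (In fact your sign bookkeeping, $z_i=t^{-x_i}$ so that $z_i^{-1}a^{b_i}z_i=t^{x_i}a^{b_i}t^{-x_i}=a^{b_i(m/n)^{x_i}}$ under $n^{x_i}\mid b_i$, is the consistent version of what the paper writes.) Where you genuinely diverge is the reverse direction. The paper first argues that one ``may assume each $z_i=t^{x_i}$'' because the constants involve no $t$-letters, and then invokes Britton's lemma to extract the exponent equation --- a step that, for general $\BS(m,n)$ (a non-solvable HNN extension), requires some normal-form care. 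You instead push the whole equation through the homomorphism $\phi:\BS(m,n)\to\MQ\rtimes_\alpha\MZ$, $a\mapsto(1,0)$, $t\mapsto(0,1)$, observe that $\phi(z^{-1}a^kz)=(\alpha^{-\sigma_t(z)}k,0)$ depends on $z$ only through $\sigma_t(z)$, and read off $\sum_i b_i\alpha^{-\sigma_t(z_i)}=b$ directly. Since $\phi$ is only used to derive a necessary condition, no injectivity is needed, and this cleanly eliminates both the normalization of the $z_i$ and the appeal to Britton's lemma. The trade-off is that the paper's route stays inside $\BS(m,n)$ and generalizes to situations where no convenient solvable quotient detects the constants, whereas your quotient argument is shorter and makes the reduction to the algebraic exponent equation \eqref{eq:3-part} completely mechanical.
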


\begin{proof}
If $S$ is a positive instance of $\TPART$, then by Proposition \ref{pr:3-part}, \eqref{eq:3-part} has a solution $x_1,\ldots,x_{3k}\in\MZ$
satisfying $0\le x_1,\ldots,x_{3k} \le 2ckL$. 
By construction, we have $ \frac{b_i}{n^{x_i}} \in\MZ$ for all of the $x_i$.
Hence, in $\BS(m,n)$ we have
$t^{-x_i}a^{b_i}t^{x_i}=a^{b_i \rb{\frac{m}{n}}^{x_i}}$
and
$$
\prod t^{-x_i}a^{b_i}t^{x_i}=
\prod a^{b_i \rb{\frac{m}{n}}^{x_i}}=
a^{\sum b_i \rb{\frac{m}{n}}^{x_i}}=a^b.
$$
Thus, $z_i=t^{x_i}$ is a solution of \eqref{eq:spherical-hard}.

Conversely, suppose that \eqref{eq:spherical-hard} has a solution
$z_1,\ldots,z_{3k}\in\BS(m,n)$.
Since the constants do not involve $t$-letters, 
we may assume that each $z_i = t^{x_i}$ for some $x_i\in\MZ$.  
Applying Britton's lemma, it is easily seen that the algebraic equation with exponents
$$
b_1 (\tfrac{m}{n})^{x_1}+\cdots+b_{3k} (\tfrac{m}{n})^{x_{3k}} = b
$$
must hold. Therefore, by Proposition \ref{pr:3-part},
$S$ is a positive instance of $\TPART$.
\end{proof}

\begin{corollary}\label{co:spherical-NP-hard}
The Diophantine problem for $\BS(m,n)$ is strongly $\NP$-hard whenever $|m| \neq |n|$ (i.e. Theorem \ref{th:gen-np-hard} holds).
\end{corollary}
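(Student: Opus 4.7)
The plan is to derive the corollary directly from Proposition \ref{pr:NP-hardness-DP} together with the known strong $\NP$-completeness of $\TPART$. Correctness of the reduction is already in hand; what remains is to verify that it runs in polynomial time and produces an equation whose size, measured as word length on $\{a,t\}$, is polynomial in the \emph{unary} size of the $\TPART$ instance.

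Let $S=\{a_1,\ldots,a_{3k}\}$ be the input, with anticipated sum $L$, and set $N=kL$; under the unary encoding (which suffices, since $\TPART$ is strongly $\NP$-complete), $N$ is bounded by the input size. By Proposition \ref{pr:unique_solution_exp_eq}, the constant $c = O(\log N)$ is computable in polynomial time, hence so is $4ckL = O(N\log N)$. The coefficients $q_{a_i}$ and $r$ are polynomials in $\alpha = m/n$ of degree $O(N\log N)$, so after clearing denominators the integers $b_i = q_{a_i}n^{4ckL}$ and $b = rn^{4ckL}$ have binary representations of length polynomial in $N$.

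Next I would verify that each constant $a^{b_i}$ and $a^{b}$ appearing in \eqref{eq:spherical-hard} admits a word encoding on $\{a,t\}$ of polynomial length. Because $|m|\neq|n|$, the defining relation $t^{-1}a^m t = a^n$ (or its inverse, $t\,a^n\,t^{-1} = a^m$) can be applied iteratively to replace $a^{n\ell}$ by $t^{-1}a^{m\ell}t$ when $|m|<|n|$, or $a^{m\ell}$ by $t\,a^{n\ell}\,t^{-1}$ when $|m|>|n|$. Iterating produces a base-$\max(|m|,|n|)/\min(|m|,|n|)$ expansion analogous to Lemma \ref{le:log-length}: every $a^{M}$ is represented by a word of length $O(\log|M|)$ with constants depending only on $m,n$. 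Summing over the $3k+1$ coefficients shows that \eqref{eq:spherical-hard} has total length polynomial in $N$.

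Combining these estimates, the map from $S$ to the equation \eqref{eq:spherical-hard} is computable in polynomial time, so Proposition \ref{pr:NP-hardness-DP} together with the strong $\NP$-hardness of $\TPART$ yields strong $\NP$-hardness of $\DP(\BS(m,n))$. The principal obstacle is the compression step: the raw integers $b_i$ are exponentially large, and one must genuinely exploit $|m|\neq|n|$ to obtain polylogarithmic-length representations of $a^{b_i}$ on $\{a,t\}$. For $|m|=|n|$ this compression breaks down, consistent with the fact that the unimodular case $\BS(n,\pm n)$ is handled separately.
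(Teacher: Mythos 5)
Your argument is correct and follows the paper's route: the corollary is deduced directly from Proposition \ref{pr:NP-hardness-DP} together with the strong $\NP$-completeness of $\TPART$, with the paper leaving the polynomial-time verification of the reduction implicit. The one substantive detail you add---that the exponentially large exponents $b_i$ must be compressed to words of length $O(\log|b_i|)$ on $\{a,t\}$ by exploiting the distortion of $\langle a\rangle$ when $|m|\neq|n|$---is exactly the $\BS(m,n)$ analogue of Lemma \ref{le:log-length} needed to make the reduction polynomial in the unary size of $S$, and your treatment of it is sound.
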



\subsection{$\NP$-hardness for nonorientable equations of genus 1}

In this section we fix $n
\in \MZ$ such that $|n|>1$. Consider an instance of the $3$-partition problem 
$S=\{a_1,\ldots,a_{3k}\}$
with $L/2 < a_i < L/4$, where $L$ is the anticipated sum. For $s\in \MN$, define
$$A(s) = \sum_{i=1}^s n^{ikL} \mbox{ and } A^* = \sum_{i=0}^{k-1} n^{ikL(L+1)}A(L),$$
and let $M = k^2L(L+1)$. Finally, consider the following nonorientable equation of genus $g=1$ over the group $\BS(1,n)$:
\begin{equation}\label{eq:gen1reduction}
W = x^2
(y^{-1} t^{2M} y)
a^{-A^*}
\prod_{i=1}^{3k} z_i^{-1}a^{A(a_i)} z_i = 1.
\end{equation}

\begin{proposition}\label{pr:non-orient1}
The equation \eqref{eq:gen1reduction} 
has a solution if and only if $S$ is a positive instance 
of the $3$-partition problem.
\end{proposition}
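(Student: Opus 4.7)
The plan is to transform \eqref{eq:gen1reduction} into an arithmetic condition in $\MZ[\tfrac 1n]$ analogous to equation \eqref{eq:3-part} from Proposition \ref{pr:3-part}, and then match solutions of that condition with $3$-partitions of $S$. Writing $x = (\xi, p)$, $y = (\eta, q)$, $z_i = (\zeta_i, r_i)$ in the semidirect product $\MZ[\tfrac 1n] \rtimes \MZ$ and projecting onto the $\MZ$-component via $\sigma_t$ gives $2p + 2M = 0$, so $p = -M$. A direct computation using $(\alpha, \beta)(\gamma, \delta) = (\alpha + n^{-\beta}\gamma, \beta + \delta)$ then yields
\[
x^2 \cdot (y^{-1} t^{2M} y) = \bigl((1+n^M)\mu,\ 0\bigr), \qquad \mu := \xi - \eta n^q (n^M - 1),
\]
with $\mu$ ranging freely over $\MZ[\tfrac 1n]$ as $x,y$ vary (already by varying $\xi$ with $\eta = 0$). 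Combined with $z_i^{-1} a^{A(a_i)} z_i = (n^{r_i} A(a_i), 0)$, the equation \eqref{eq:gen1reduction} is equivalent to the existence of integers $r_1,\ldots,r_{3k}$ and $\mu \in \MZ[\tfrac 1n]$ satisfying $\sum_i n^{r_i} A(a_i) = A^* - (1+n^M)\mu$; I denote this condition $(\star)$.

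For the forward direction, given a $3$-partition of $S$ into triples $T_0,\ldots,T_{k-1}$ with $\sum T_j = L$, enumerate $T_j = \{a_{i_{j,1}}, a_{i_{j,2}}, a_{i_{j,3}}\}$ and set $r_{i_{j,\ell}} := kL\bigl(j(L+1) + \sum_{\ell' < \ell} a_{i_{j,\ell'}}\bigr)$. Then $n^{r_{i_{j,\ell}}} A(a_{i_{j,\ell}})$ is a sum of $a_{i_{j,\ell}}$ consecutive powers of $n^{kL}$, and the three such sums for $\ell = 1,2,3$ exactly tile the $j$-th of the $k$ blocks of consecutive powers of $n^{kL}$ comprising $A^*$. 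Hence $\sum n^{r_i} A(a_i) = A^*$ in $\MZ$, so $(\star)$ holds with $\mu = 0$, and setting $x = t^{-M}$, $y = 1$, $z_i = t^{r_i}$ produces a solution of \eqref{eq:gen1reduction}.

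For the converse, I'd start from a solution of $(\star)$, first reduce to the case $\mu = 0$, and then extract the partition by a uniqueness-of-base-$n$-representation argument. Using the periodicity $n^{2M} \equiv 1 \pmod{1+n^M}$, I would shift each $r_i$ into the range $[0, 2M)$ and invoke $A^* < n^M$ (since the maximum $n$-exponent appearing in $A^*$ is $M - kL$), together with a size estimate on $\sum n^{r_i} A(a_i)$ in this shifted range, to force $\mu = 0$. Once $\sum n^{r_i} A(a_i) = A^*$ holds in $\MZ[\tfrac 1n]$, the base-$n$ uniqueness encoded in Proposition \ref{pr:unique_solution_exp_eq}, applied with gap $c = kL$ which easily exceeds the $O(\log(kL))$ threshold, forces the multiset of exponents $\{r_i + jkL : 1 \le j \le a_i\}$ to coincide with the multiset of exponents $\{kL(i(L+1) + j) : 0 \le i < k,\ 1 \le j \le L\}$ of $A^*$. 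Since $A^*$ is a disjoint union of $k$ blocks of $L$ consecutive multiples of $kL$ separated by a single missing multiple and $L/4 < a_i < L/2$, each $A(a_i)$-run of $a_i$ consecutive exponents must fit inside a single block, and exactly three such runs tile each block with total $L$, yielding the required $3$-partition. The principal obstacle is the $\mu = 0$ reduction; I expect to handle it by combining the periodicity modulo $1+n^M$ with the generous choice $M = k^2 L(L+1)$, which ensures that after bringing the $r_i$ into a bounded range, the supports of $\sum n^{r_i} A(a_i)$ and $A^*$ cannot differ by a nonzero multiple of $(1+n^M)$ without violating a sign/magnitude comparison.
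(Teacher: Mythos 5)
Your reduction of \eqref{eq:gen1reduction} to the arithmetic condition $(\star)$ is correct and is exactly the paper's route: the computation $x^2(y^{-1}t^{2M}y)=((1+n^M)\mu,0)$ matches the paper's identity with $X=v_1+v_2(1-n^M)$, and your forward direction (tiling the $k$ blocks of $A^*$ by the runs $n^{r_i}A(a_i)$) is complete and agrees with the paper's choice $z_i=t^{kL\alpha_i}$. The gap is in the converse, precisely at the step you yourself flag as ``the principal obstacle.'' Forcing $\mu=0$ is the technical heart of the paper's proof, carried out in a dedicated Lemma (\ref{lem:meta-hardness}), and the sketch you give would not work as stated. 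Shifting the $r_i$ into $[0,2M)$ via $n^{2M}\equiv 1 \pmod{1+n^M}$ does not yield a magnitude bound below the modulus: a sum of $Lk$ powers $n^{r}$ with $r<2M$ can be of order $n^{2M}$, far exceeding $n^M+1$, so the comparison with $A^*<n^M$ proves nothing at that stage. One must push the exponents all the way into $[0,M)$, and the relation $n^{M+j}\equiv -n^{j}$ then turns the left-hand side into a \emph{signed} sum of powers of $n$.

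After that reduction the real work begins: you must rule out cancellation among signed powers (a power appearing with both signs, or $|n|$ copies of the same power merging into a higher one modulo $n^M+1$), show that both signed sums then lie strictly between $0$ and $n^M+1$ so that the congruence is an equality over $\MZ$, and only then conclude $\mu=0$ and match exponents. The paper does this via a minimality argument on the number of terms plus a digit-sum comparison in base $n$ (with $n$ possibly negative, a case your sketch does not address at all, although the hypothesis is only $|n|>1$). Your appeal to Proposition \ref{pr:unique_solution_exp_eq} is legitimate only \emph{after} $\mu=0$ has been established, since that proposition concerns an exact equality of two sums of powers over $\MZ$, not a congruence modulo $1+n^M$. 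So the architecture is right and the easy half is done, but the decisive step is missing and the plan offered for it fails on the magnitude estimate; you would need to reconstruct something equivalent to Lemma \ref{lem:meta-hardness}.
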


\begin{proof}
``$\Leftarrow$''
Suppose that $S$ is a positive instance.
Reindexing $a_1,\ldots,a_{3k}$ if necessary, we may assume that 
$a_{3j+1}+a_{3j+2}+a_{3(j+1)} = L$ for each $j = 0,\ldots , k-1$. 
Let us define $\alpha_1,\ldots,\alpha_{3k} \in\MN$ as follows:
\begin{align*}
\alpha_1 &= 0       &\alpha_4&= (L+1)        &\alpha_{3i+1}&= i(L+1) \\
\alpha_2 &= a_1     &\alpha_5&= (L+1)+a_4    &\alpha_{3i+2}&= i(L+1)+a_{3i+1} \\
\alpha_3 &= a_1+a_2 &\alpha_6&= (L+1)+a_4+a_5&\alpha_{3i+3}&= i(L+1)+a_{3i+1}+a_{3i+2},
\end{align*}
for $i = 0,1,\ldots,k-1$.
Then $x=t^{-M}$, $y=1$ and $z_i = t^{kL\alpha_i}$ 
is a solution to \eqref{eq:gen1reduction} in the
free group $F(a,t)$.

``$\Rightarrow$''
Suppose that \eqref{eq:gen1reduction} has a solution.
First, we prove the following auxiliary statement.

\begin{lemma}\label{lem:meta-hardness}
Fix $K,M,p_1,\ldots,p_K\in\MN$ and $n\in \MZ$ such that $M$ is even, $|n|>1$, 
$p_1<p_2<\cdots<p_K<M/2$, and $p_{i+1}-p_i \geq K$. Let $0 \leq l \leq K$.
Consider the equation
\begin{equation}\label{eq:sum385}
X(n^M+1) = 
(n^{p_1}+\cdots+n^{p_K}) - \sum_{i=1}^{l} \varepsilon_i n^{x_i}
\end{equation} 
with unknowns $x_i,\varepsilon_i,X$. 
Every integer solution of \eqref{eq:sum385} that satisfies
\begin{equation}\tag{*}
0\le x_1,\ldots,x_l<M 
\ \mbox{ and }\ 
\varepsilon_i \in \{-1,1\}
\end{equation}
 necessarily satisfies the following:
\begin{enumerate}[(i)]
\item 
$X=0$,
\item 
$\varepsilon_1=\cdots=\varepsilon_l=1$,
\item 
$l = K \mbox{ and }\{x_1,\ldots,x_l\} = \{p_1,\ldots,p_K\}$.
\end{enumerate}
\end{lemma}

\begin{proof}
Let $l$ be the minimum number for which there is a solution for (\ref{eq:sum385}) 
satisfying (*). No such solution exists when $l=0$ and
there is an obvious solution $X=0$, $\varepsilon_i = 1$, $x_i = p_i$ when $l = K$.
Hence, we may assume that $l\in \{1,2,\ldots,K\}$.
Below we prove that $l = K$.
 
Let $x_i,\varepsilon_i,X$ be a solution to (\ref{eq:sum385}) 
that satisfies (*). 
If $\varepsilon_in^{x_i}+\varepsilon_j n^{x_j}=0$, 
then eliminating $\varepsilon_in^{x_i}$ and $\varepsilon_j n^{x_j}$
and reindexing terms we decrease $l$ by $2$, contradicting minimality. 
Thus, the same power cannot appear twice with opposite signs.
Similarly, if $x_{i_1} =\cdots = x_{i_n} = s$ for $n$ distinct
indices $i_1,\ldots,i_n$, then by the foregoing argument $\varepsilon_{i_1} =\cdots = \varepsilon_{i_n}$, 
and we may perform the substitution
\begin{align*}
\varepsilon_{i_1} n^{x_{i_1}}+ \cdots + \varepsilon_{i_n} n^{x_{i_n}} &\to \varepsilon_{i_1} n^{s+1} &&\mbox{if }s<M-1,\\
\varepsilon_{i_1} n^{x_{i_1}}+ \cdots + \varepsilon_{i_n} n^{x_{i_n}} &\to -\varepsilon_{i_1}  &&\mbox{if }s=M-1,
\end{align*}
(because $n^M \equiv -1 \pmod{n^M+1}$)
reducing the number of terms by $n-1$.
In both cases the minimality of $l$ is contradicted, so we have shown that that each power may appear at most $n-1$ times, and always with the same sign. Letting $I = \{i\mid \varepsilon_i=1\}$, $J = \{i\mid \varepsilon_i=-1\}
$, this means that
\begin{equation}\label{I-J-bound}
\sum_{i \in I} n^{x_i},\ \sum_{i \in J} n^{x_i}  
\ <\ 
\sum_{\delta=1}^K (n-1)n^{M-\delta},
\end{equation}
and the numbers $\sum_{i\in I} n^{x_i}
$ and $\sum_{i\in J} n^{x_i}$ have base-$n$ representations (where the base is allowed to be negative) whose digits add up 
to $|I|$ and $|J|$, respectively. Observe that \eqref{eq:sum385} implies the congruence
\begin{equation}\label{eq:congruence-IJ}
\sum_{i \in I} n^{x_i} 
\ \equiv\  
n^{p_1} + \cdots +n^{p_K} + \sum_{i \in J} n^{x_i} \ (\mbox{mod } n^M+1).
\end{equation}
The assumptions on $p_1,\ldots,p_K$ ensure that $p_K<M-K$; combining this with \eqref{I-J-bound} we have that both sides of \eqref{eq:congruence-IJ} are between 0 and $n^M+1$. Thus, the congruence is actually an equality, proving that $X=0$. We may now consider the equation
\begin{equation}\label{eq:IJ}
\sum_{i \in I} n^{x_i} 
\ = \ 
n^{p_1} + \cdots +n^{p_K} + \sum_{i \in J} n^{x_i}.
\end{equation}
It is clear that $|J|<K$, since $|I| +|J| \le K$ and $I$ must be nonempty. Considered in base-$n$, the number $n^{p_1} + \cdots +n^{p_K}$ 
is represented as $K$ ones separated by strings of $K-1$ zeros. 
It is easily seen that adding fewer than $K$ powers of $n$ 
to this number cannot reduce the sum of its digits
(at least $(n-1)K$ powers of $n$ would be required to do this).
Hence, the base-$n$ digits of the right hand side must sum to at least $K$, proving that $|I|=K$, $J=\emptyset$, and $l=K$.
The uniqueness of the base-$n$ representation implies that the only solution is 
(up to a permutation of indices) $x_i = p_i$, completing the proof.
\end{proof}

To complete the proof of Proposition \ref{pr:non-orient1},
let $x,y,z_i$ be a solution to \eqref{eq:gen1reduction}. 
It is not hard to see that the following assumptions can be made:
\begin{itemize}
\item
$x=(v_1,-M)$ for some $v_1\in\MZ[\tfrac{1}{n}]$;
\item
$y=(v_2,0)$ for some $v_2\in\MZ[\tfrac{1}{n}]$;
\item
$z_i=t^{u_i}=(0,u_i)$ for each $i=1,\ldots,3k$.
\end{itemize}
Hence, we have
\begin{align*}
x^2y^{-1}t^{2M}y &= (v_1(1+n^M),-2M)(v_2(n^{-2M}-1),2M)\\
&=(v_1(1+n^M)+v_2(1-n^{2M}),0) = ((v_1+v_2(1-n^M))(1+n^M)),0).
\end{align*}
Letting $X = v_1+v_2(1-n^M)$, we thus obtain the equation
\begin{align*}
X(1+n^M) &= A^* - \sum_{i=1}^{3k}n^{u_i}A(a_i)\\
&= \sum_{i=0}^{k-1}n^{ikL(L+1)} 
(n^{kL}+n^{2kL}+\cdots+n^{LkL}) - \sum_{i=1}^{3k}n^{u_i}\rb{n^{kL}+n^{2kL}+\cdots+ n^{a_ikL}}
\end{align*}
from a direct computation in the first component of \eqref{eq:gen1reduction}. Multiplying both sides by a nonnegative power of $n$ if necessary, we may assume that $X\in \MZ$, and it is clear that the assumptions of Lemma \ref{lem:meta-hardness} are satisfied. Thus, an application of Lemma \ref{lem:meta-hardness} shows that the sums $A^*$ and $\sum_{i=1}^{3k}n^{u_i}A(a_i)$ both contain the same $Lk$ powers of $n$. Now, the same procedure used in the proof of Proposition \ref{pr:3-part} (i.e. comparing powers and solving for $u_i$) shows that $S$ is a positive instance, completing the proof.
\end{proof}

\begin{corollary}
    The Diophantine problem for nonorientable equations of genus 1 over $\BS(1,n)$ is strongly $\NP$-hard whenever $|n|>1$.
\end{corollary}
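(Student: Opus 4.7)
The plan is to derive the corollary directly from Proposition \ref{pr:non-orient1} together with the strong $\NP$-completeness of $\TPART$. Proposition \ref{pr:non-orient1} already furnishes a many-one reduction $S \mapsto W$ from $\TPART$ to the Diophantine problem for the class of nonorientable equations of genus $1$ over $\BS(1,n)$, sending an instance $S$ to the equation \eqref{eq:gen1reduction}, which by construction is nonorientable of genus exactly $1$. The only remaining task is to verify that this reduction is computable in time polynomial in the \emph{unary} size of $S$, which is precisely what is needed to upgrade ordinary $\NP$-hardness to \emph{strong} $\NP$-hardness.

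First I would estimate the word length of $W$ on the alphabet $\{a,t,x,y,z_1,\ldots,z_{3k}\}$. The variable letters and the surrounding structure contribute only $O(k)$ symbols, so the size is dominated by the three kinds of constant blocks: $t^{2M}$, $a^{A(a_i)}$, and $a^{-A^*}$. The first has length $2M = 2k^2 L(L+1)$. For the $a$-powers, the definitions give $|A(a_i)| \le |n|^{a_i k L + 1}$ and $|A^*| \le |n|^{k^2 L(L+1) + 1}$, so (treating $|n|$ as a fixed constant) Lemma \ref{le:log-length} produces word representations of length $O(a_i k L)$ and $O(k^2 L^2)$, respectively. Summing over the $3k$ blocks $a^{A(a_i)}$ and using $\sum_i a_i = kL$, one obtains $|W| = O(k^2 L^2)$, which is polynomial in $k + L$ and hence in the unary size of $S$.

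Second I would note that the integer arithmetic required to build $W$, namely evaluating $A(a_i)$, $A^*$, and the base-$|n|$ expansions called for by Lemma \ref{le:log-length}, is straightforward and can be performed in time polynomial in $k + L$. Combining these bounds shows that $S \mapsto W$ is a polynomial-time many-one reduction even under the unary encoding, so Proposition \ref{pr:non-orient1} together with the strong $\NP$-completeness of $\TPART$ yields strong $\NP$-hardness of the Diophantine problem for nonorientable genus-$1$ equations over $\BS(1,n)$. I do not anticipate any real obstacle here; the corollary is essentially a complexity-bookkeeping consequence of the reduction already carried out in Proposition \ref{pr:non-orient1}.
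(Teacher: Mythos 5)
Your proposal is correct and follows exactly the route the paper intends: the corollary is an immediate consequence of Proposition \ref{pr:non-orient1} once one checks that the reduction $S\mapsto W$ given by \eqref{eq:gen1reduction} produces a word of length polynomial in $k+L$ (your bound $|W|=O(k^2L^2)$ is right), so that strong $\NP$-hardness is inherited from $\TPART$. The paper leaves this bookkeeping implicit, so your write-up simply makes explicit what the authors take for granted.
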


\section{Complexity upper bounds for quadratic equations over $\BS(1,n)$}

\label{se:metabelian-equations}

In this section we classify the computational hardness of the Diophantine problem for several classes of quadratic equations over $\BS(1,n)$. We prove that $\DP(\BS(1,n))$ is 
decidable in linear time for orientable equations of genus $g\ge 1$
and nonorientable equations of genus $g\ge 2$, and that it is $\NP$-complete when restricted to either spherical equations or nonorientable equations of genus 1. Combined with Theorem \ref{th:gen-np-hard}, these results prove Theorem \ref{th:meta-general-complete}. We also prove Theorem \ref{th:solution-bound} (separately, for each of the aforementioned cases). Note that the $O(|W|^3)$ solution bound can be improved in some cases (for instance, we are guaranteed a solution that is $O(|W|)$ when the problem is restricted to nonorientable equations of genus greater than 2); details are provided in each case of the proof of Theorem \ref{th:solution-bound}. The following lemma is used several times in this section.

\begin{lemma}\label{le:easy-fact}
Let $u = \gcd(v_1,\ldots,v_k)$, where $v_1,\ldots,v_k \in \MN$. Then   
\begin{equation}\label{eq:easy-fact}
n^{u}-1 = \gcd(n^{v_1}-1,\ldots,n^{v_k}-1).   
\end{equation}
\end{lemma}

\begin{proof}
For any $a,b\in \MN$ with $b\ge a>0$, we have  $n^{b-a}-1 = (n^{b}-1) - n^{b-a}(n^{a}-1)$, implying that
$$
\gcd(n^{a}-1,n^{b}-1) = \gcd(n^{a}-1,n^{b-a}-1).
$$
More generally, if $r=b-aq$, then
$$
\gcd(n^{a}-1,n^{b}-1) = \gcd(n^{a}-1,n^{r}-1),
$$
so one may run the Euclidean algorithm on the exponents to show that
$$
\gcd(n^{a}-1,n^{b}-1) = n^{\gcd(a,b)}-1.
$$
This proves the statement for $k=2$, and the general result follows easily by induction.
\end{proof}

\subsection{Spherical equations in $\BS(1,n)$}

\begin{proposition}\label{pr:spherical-solution}
Let $c_i=(\alpha_i,\beta_i)\in \MZ[\tfrac{1}{n}] \rtimes \MZ \cong \BS(1,n)$ for $i = 1,\ldots,k$, and let $\beta = \gcd (|\beta_1|,\ldots,|\beta_k|)$. Then the equation
\begin{equation}\label{eq:le-spherical}
\prod_{i=1}^k z_i^{-1}c_iz_i = 1
\end{equation}
has a solution if and only if the following conditions hold:
\begin{enumerate}[(i.)]
    \item \label{beta-cond}
    $\sum_{i=1}^k\beta_i=0$
    \item \label{alpha-cond}
    there exist $x_1,\ldots,x_k\in \MZ$ such that
    $\alpha_1n^{x_1}+\cdots + \alpha_k n^{x_k} \equiv 0 \ (\mbox{mod }n^{\beta}-1)$.
\end{enumerate}
Moreover, if \eqref{eq:le-spherical} has a solution, then it has a solution of size $O(|W|^3)$.
\end{proposition}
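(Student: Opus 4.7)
The plan is to compute the product $\prod_{i=1}^k z_i^{-1}c_iz_i$ explicitly in the semidirect product coordinates and match the result against the two claimed conditions. Writing $z_i=(u_i,v_i)\in\MZ[\tfrac{1}{n}]\rtimes\MZ$ and using the product rule $(a,b)(c,d)=(a+cn^{-b},\,b+d)$, a direct computation gives
$$
z_i^{-1}c_iz_i \;=\; \bigl(\alpha_i n^{v_i} + u_i n^{v_i}(n^{-\beta_i}-1),\; \beta_i\bigr).
$$
Multiplying these together over $i$, the second component sums to $\sum\beta_i$ (yielding condition (i)), and the first component, after the change of variables $x_i := v_i - \sum_{j<i}\beta_j$, simplifies to
\begin{equation}\label{eq:plan-first-comp}
\sum_{i=1}^k \alpha_i n^{x_i} \;+\; \sum_{i=1}^k u_i n^{x_i}(n^{-\beta_i}-1) \;=\; 0.
\end{equation}
Thus, assuming (i), the problem reduces to analyzing when \eqref{eq:plan-first-comp} can be solved for $u_i\in\MZ[\tfrac{1}{n}]$ and $x_i\in\MZ$.

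The algebraic core is the factorization $n^{-\beta_i}-1 \;=\; -n^{-\beta_i}(n^\beta-1)\tau_i$, with $\tau_i := (n^{\beta_i}-1)/(n^\beta-1) \in \MZ[\tfrac{1}{n}]$, which is valid because $\beta\mid\beta_i$. This shows that the second sum in \eqref{eq:plan-first-comp} always lies in the ideal $(n^\beta-1)\MZ[\tfrac{1}{n}]$, so \eqref{eq:plan-first-comp} forces $\sum\alpha_i n^{x_i}\in(n^\beta-1)\MZ[\tfrac{1}{n}]$, which is condition (ii); this proves necessity. For sufficiency, the classical identity $\gcd_{\MZ}(n^{a_1}-1,\ldots,n^{a_k}-1) = n^{\gcd(a_i)}-1$ shows that $\gcd(\tau_i)$ is a unit in $\MZ[\tfrac{1}{n}]$, so after clearing $n$-denominators Lemma \ref{le:bezout} produces coefficients $\mu_i\in\MZ[\tfrac{1}{n}]$ with $\sum\mu_i\tau_i=1$. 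Setting $C:=-(\sum\alpha_i n^{x_i})/(n^\beta-1)\in\MZ[\tfrac{1}{n}]$ (well-defined by (ii)) and $u_i := C\mu_i n^{\beta_i-x_i}$ then solves \eqref{eq:plan-first-comp}.

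For the size bound, I first observe that $n^{x_i}\pmod{n^\beta-1}$ depends only on $x_i\bmod\beta$, so if (ii) is satisfiable at all it is satisfiable by some $x_i\in\{0,1,\ldots,\beta-1\}$, giving $|x_i|\le\beta\le|W|$. Lemma \ref{le:semidirect-bound} bounds both $|\alpha_i|$ and the denominator of $\alpha_i$ by $|n|^{|W|}$, and $|\beta_i|\le|W|$, so $|\tau_i|$, $|C|$, the denominator of $C$, and (via Lemma \ref{le:bezout}) $|\mu_i|$ are all $|n|^{O(|W|)}$. Hence the numerator and denominator of $u_i=C\mu_i n^{\beta_i-x_i}$ are each $|n|^{O(|W|)}$, while $|v_i|=|x_i+\sum_{j<i}\beta_j|=O(|W|)$. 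Applying Lemma \ref{le:log-length} to each $z_i=(u_i,v_i)$ yields $|z_i|=O(|W|)$, and summing over $k\le|W|$ variables produces a total solution size of $O(|W|^2)$, comfortably within the claimed $O(|W|^3)$ bound.

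The main obstacle I anticipate is the bookkeeping around the ring $\MZ[\tfrac{1}{n}]$: verifying that $\gcd(\tau_i)$ is a unit when some $\beta_i$ are negative (requiring the standard sign and unit manipulations involving $n^{-|\beta_i|}$), applying Lemma \ref{le:bezout} after clearing $n$-denominators, and carefully tracking the powers of $n$ that appear when converting $(u_i,v_i)$ back to a word via Lemma \ref{le:log-length}. These steps are all routine in principle but must be handled with care to ensure the polynomial size bound goes through cleanly, and the degenerate case $\beta=0$ (all $\beta_i=0$) should be dispatched separately since the equation then reduces to a purely additive problem in $\MZ[\tfrac{1}{n}]$.
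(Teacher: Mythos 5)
Your argument follows essentially the same route as the paper's: the conjugation formula $z_i^{-1}c_iz_i=(n^{v_i}(\alpha_i+u_i(n^{-\beta_i}-1)),\beta_i)$, the change of variables $x_i=v_i-\sum_{j<i}\beta_j$, the identity $\gcd(n^{|\beta_1|}-1,\ldots,n^{|\beta_k|}-1)=n^{\beta}-1$, and the B\'ezout construction via Lemma \ref{le:bezout} all appear in the paper's proof; the only difference is that the paper does the sign bookkeeping over $\MZ$ with auxiliary quantities $v_i'$ where you package it into the factorization $n^{-\beta_i}-1=-n^{-\beta_i}(n^{\beta}-1)\tau_i$ over $\MZ[\tfrac{1}{n}]$. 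Two small points: with that factorization your $C$ should be $+\bigl(\sum\alpha_i n^{x_i}\bigr)/(n^{\beta}-1)$, not its negative, and condition (ii) has to be read in $\MZ[\tfrac{1}{n}]$ (equivalently, after clearing denominators by a power of $n$, which is invertible modulo $n^{\beta}-1$).

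The genuine gap is the degenerate case you defer, and it is less routine than you suggest. When $n^{\beta}-1=0$ --- that is, all $\beta_i=0$, or $n=-1$ with $\beta$ even (a case your ``$\beta=0$'' caveat does not cover) --- the $\tau_i$ are undefined, condition (ii) is a genuine equation $\sum\alpha_i n^{x_i}=0$, and your reduction of the $x_i$ modulo $\beta$ is unavailable, so nothing in your argument bounds the exponents of a solution. For $n=-1$ this is harmless since $n^{x_i}=\pm1$, but for $|n|>1$ with all $\beta_i=0$ you must invoke Theorem \ref{th:Semenov-NP} (imported from the companion paper) to guarantee a solution with $0\le x_i\le\sum_i\log_{|n|}(|\alpha_i||n|^{|W|}+1)=O(|W|^2)$; each $z_i=t^{x_i}$ then has length $O(|W|^2)$ and there are up to $|W|$ of them, which is exactly where the $O(|W|^3)$ bound in the statement comes from (everywhere else you correctly get $O(|W|^2)$). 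Without that ingredient the ``moreover'' clause --- the part of the proposition that supplies the $\NP$ certificate --- is unproved in the one case where it is sharp.
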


\begin{proof} 
Suppose $z_i = (v_i,y_i)$ is a solution to \eqref{eq:le-spherical}. Conjugating $c_i$ by $z_i$ we obtain
$$
(v_i,y_i)^{-1}(\alpha_i,\beta_i)(v_i,y_i)=
(n^{y_i}(\alpha_i+v_i(n^{-\beta_i}-1)),\beta_i),
$$
and so
\begin{equation}\label{eq:alpha-beta-star}
    (0,0) = \prod_{i=1}^k (v_i,y_i)^{-1}(\alpha_i,\beta_i)(v_i,y_i)=
\rb{\sum_{i=1}^k n^{y_i-\sum_{j=1}^{i-1}\beta_j}
(\alpha_i+v_i(n^{-\beta_i}-1)),\sum_{i=1}^k \beta_i},
\end{equation}
which immediately implies the necessity of condition \eqref{beta-cond}. Set
$$
v_i' = \begin{cases}
    n^{-\beta_i}v_i 
    \mbox{ if }\beta_i > 0 \\
    -v_i 
    \mbox{ otherwise}
\end{cases}
$$
so that $v_i(n^{-\beta_i}-1) = -v_i'(n^{|\beta_i|}-1)$,
and let $L$ be a nonnegative integer sufficiently large that $n^{L+y_i-\sum_{j=1}^{i-1}\beta_j}v_i' \in \MZ$ for $i = 1,\ldots,k$. Letting $x_i = L + y_i - \sum_{j=1}^{i-1}\beta_j$, we obtain from \eqref{eq:alpha-beta-star}
\begin{equation}\label{eq:vee-prime}
\alpha_1n^{x_1}+\cdots + \alpha_k n^{x_k} = \sum_{i=1}^k n^{x_i}v_i'(n^{|\beta_i|}-1),
\end{equation}
where the right hand side is a sum of integer multiples of $n^{|\beta_i|}-1$. An application of Lemma \ref{le:easy-fact} now proves that \eqref{alpha-cond} holds.

Conversely, suppose that \eqref{beta-cond} and \eqref{alpha-cond} hold, so there are integers $x_1,
\ldots,x_k$ such that 
\begin{equation}\label{eq:beta-cong}
\alpha_1 n^{x_1}+
\cdots + \alpha_k n^{x_k} \equiv 0 \ (\mbox{mod }n^{\beta}-1).
\end{equation}
We consider three cases.
\begin{case}\label{case1} Suppose that $\beta_1 = \cdots = \beta_k = 0$ (implying that $\beta = 0$), so that the congruence \eqref{eq:beta-cong} becomes an equation of the form \eqref{eq:alg_exp_int}. It is clear that $z_i = t^{x_i}$ is a solution to \eqref{eq:le-spherical} in this case, and (multiplying the $\alpha_i$ by $n^{|W|}$ to ensure integer coefficients) Theorem \ref{th:Semenov-NP} guarantees a solution with 
\begin{align*}
0
\ &\le\ 
x_1,\ldots,x_k
\ \le\ 
\sum_{i=1}^k \log_{|n|} (|\alpha_i||n|^{|W|}+1) \leq \sum_{i=1}^k \log_{|n|}(|n|^{2|W|} + 1) \\
&< k(2|W|+1) \leq 2|W|(|W|+1).    
\end{align*}
Therefore, the size $\sum_{i=1}^k|z_i|$ of the solution is $O(|W|^3)$.
\end{case}
\begin{case}
Assume that at least one of the $\beta_i$ is nonzero, so that $\beta \neq 0$ and
$$\alpha_1 n^{x_1}+
\cdots + \alpha_k n^{x_k} = h(n^{\beta}-1)$$
for some $h\in \MZ$. Since $n^{\beta}\equiv 1 \mod (n^{\beta}-1)$, we may assume that $0\leq x_i<\beta$, and so
$$|h| \leq \frac{n^{\beta}}{n^{\beta}-1}\sum_{i=1}^k |\alpha_i| \leq 2\sum_{i=1}^k |n|^{|c_i|} \leq 2\prod_{i=1}^k |n|^{|c_i|+1}<2|n|^{|W|}, $$
where the second inequality follows from Lemma \ref{le:semidirect-bound}.
By Lemmas \ref{le:bezout} and \ref{le:easy-fact}, there are $s_1,\ldots,s_k\in \MZ$ such that 
\begin{equation*}
\alpha_1 n^{x_1}+
\cdots + \alpha_k n^{x_k} = h\sum_{i=1}^k s_i (n^{|\beta_i|}-1)
\end{equation*}
with $$|s_i|< \sum_{i=1}^k (|n|^{|\beta_i|}+1)< k+\prod_{i=1}^k |n|^{|\beta_i|+1} \leq |W|+  |n|^{|W|} < 2n^{|W|}$$ (using Lemma \ref{le:semidirect-bound} again). Setting 
\begin{align*}
v_i &= \begin{cases}
    n^{\beta_i-x_i}hs_i 
    \ \mbox{   if }\beta_i > 0 \\
    -n^{-x_i}hs_i \ \mbox{   otherwise}
    \end{cases} \\
y_i &= x_i + \sum_{j=1}^{i-1}\beta_j
\end{align*}
we see that $z_i = (v_i,y_i)$ is a solution to \eqref{eq:alpha-beta-star} (hence, to \eqref{eq:le-spherical}). By Lemma \ref{le:log-length} (using the bounds on $|s_i|$ and $|h|$ established above) we have that $(v_i,y_i)$ may be written as a word on $\{a,t\}$ of length less than
\begin{align*}
2|n|(1+\log_{|n|}|hs_i|)+2x_i+2|\beta_i|+|y_i|&< 2|n|(3+2|W|)+3\beta+2\sum_{j=1}^{i}|\beta_j|\\ &\leq 2|n|(3+2|W|)+5|W|,
\end{align*}
which is $O(|W|)$. Hence, the entire solution has size $O(k|W|) = O(|W|^2)$.
\end{case}
\end{proof}

\begin{corollary}\label{th:meta-spherical-complete}
The Diophantine problem for spherical equations
\begin{equation*}
W = \prod_{i=1}^k z_i^{-1}c_iz_i = 1
\end{equation*}
over $\BS(1,n)$ is $\NP$-complete. Moreover, the class of spherical equations satisfies Theorem \ref{th:solution-bound}.
\end{corollary}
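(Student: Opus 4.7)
The plan is to derive the corollary as a direct combination of the structural Proposition \ref{pr:spherical-solution} with the hardness reductions already established in Section \ref{se:np-hard}. The corollary has three pieces to be addressed: $\NP$-hardness, $\NP$-membership, and the spherical case of the $O(|W|^3)$ solution bound of Theorem \ref{th:solution-bound}.

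For $\NP$-hardness I would split on $|n|$. When $|n|>1$, the equation \eqref{eq:spherical-hard} from the proof of Proposition \ref{pr:NP-hardness-DP} is already a spherical equation, so Corollary \ref{co:spherical-NP-hard} specializes to give (strong) $\NP$-hardness for the spherical subclass over $\BS(1,n)$. When $n=-1$, the reduction from $\PART$ exhibited in Section 3.3 likewise targets a spherical equation, namely $\prod_{i=1}^k z_i^{-1} a^{s_i} z_i = 1$, yielding $\NP$-hardness in this remaining case. Since the paper's standing convention is $n \neq 1$, this covers every value of $n$ in the statement.

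Next I would prove $\NP$-membership using Proposition \ref{pr:spherical-solution}: every satisfiable spherical $W=1$ admits a solution of bit-size $O(|W|^3)$, so such a solution is a valid polynomial-size nondeterministic certificate. Verification consists of substituting $z_i \mapsto g_i$ and checking that the resulting word represents $1$ in $\BS(1,n)$, which can be carried out in polynomial time using the semidirect product normal form of Lemma \ref{le:standard-word} (or, equivalently, Britton's lemma). Remark \ref{re:switching} takes care of any encoding issues, ensuring that verification is polynomial regardless of whether the constants of $W$ and the coordinates of $g_i$ are presented as words on $\{a,t\}$ or as pairs in $\MZ[\tfrac{1}{n}] \rtimes \MZ$. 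Combining membership with hardness gives $\NP$-completeness, and the ``Moreover'' clause is the $O(|W|^3)$ bound in the statement of Proposition \ref{pr:spherical-solution}, so nothing further is needed.

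I do not anticipate a genuine obstacle: the hard work lives in the two reductions of Section \ref{se:np-hard} (which were already arranged to use spherical test equations) and in Proposition \ref{pr:spherical-solution} (which packages both the size bound and the decidability in one statement). The only item requiring mild care is confirming that the verification step is polynomial under either input encoding, and this is precisely what Remark \ref{re:switching} is designed to guarantee.
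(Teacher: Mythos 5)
Your proposal is correct and follows essentially the same route as the paper: $\NP$-hardness via the spherical reductions of Section \ref{se:np-hard} (Corollary \ref{co:spherical-NP-hard} for $|n|>1$, plus the $\PART$ reduction for $n=-1$, a case split you handle more explicitly than the paper's one-line citation does), and membership together with the $O(|W|^3)$ bound via Proposition \ref{pr:spherical-solution}. The only cosmetic difference is the choice of certificate --- you use the size-bounded solution itself verified through the polynomial-time word problem, whereas the paper certifies with the conditions of Proposition \ref{pr:spherical-solution}; the introduction of the paper already observes that both certificates are valid.
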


\begin{proof}
$\NP$-hardness was proved in Corollary \ref{co:spherical-NP-hard}, and the conditions in Proposition \ref{pr:spherical-solution} are easily seen to furnish a certificate, proving that the problem is in $\NP$. The solution size bound of Theorem \ref{th:solution-bound} was also established in Proposition \ref{pr:spherical-solution}.
\end{proof}

\subsection{Orientable equations of genus $g\ge 1$}

Consider an orientable equation 
\begin{equation}\label{eq:orientable-eq}
W = \prod_{j=1}^g [x_j,y_j] \prod_{i=1}^k z_i^{-1}c_iz_i = 1 
\end{equation}
of genus $g \geq 1$,
with coefficients $c_i\in\BS(1,n)$ and variables $x_j,y_j,z_i$. 

\begin{proposition}\label{prop:comm-eq}
The equation \eqref{eq:orientable-eq} has a solution
if and only if 
\begin{equation}\label{eq:comm-eq}
\sigma_t \rb{\prod_{i=1}^k c_i}=0
\ \ \mbox{ and }\ \ 
\sigma_a \rb{\prod_{i=1}^k c_i}
\mbox{ is a multiple of } n-1.
\end{equation}
Therefore, the Diophantine 
problem for \eqref{eq:orientable-eq}
is decidable in linear time.
\end{proposition}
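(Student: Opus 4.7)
The proof splits into an equivalence statement and a complexity statement, both of which follow quickly from the lemmas already established.

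For the forward implication (``only if''), the plan is to project the equation to the abelianization $\BS(1,n)^{\text{ab}}$. Any product of commutators is trivial there, and each conjugate $z_i^{-1}c_iz_i$ has the same image as $c_i$. Hence any solution forces $\prod_{i=1}^k c_i$ to lie in $\BS(1,n)'$, which by Lemma \ref{le:derived-sbgp-conditions} is precisely the pair of conditions in \eqref{eq:comm-eq}.

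For the reverse implication (``if''), I would use the commutator width bound from Lemma \ref{le:comwidth}. Given \eqref{eq:comm-eq}, Lemma \ref{le:derived-sbgp-conditions} puts $\prod_{i=1}^k c_i$ (and its inverse) into $\BS(1,n)'$, so by Lemma \ref{le:comwidth} we may write $\left(\prod_{i=1}^k c_i\right)^{-1}=[u,v]$ for some $u,v\in\BS(1,n)$. Setting $x_1=u$, $y_1=v$, $x_j=y_j=1$ for $j\ge 2$ and $z_i=1$ for all $i$ then gives a solution to \eqref{eq:orientable-eq}, since the extra commutators $[1,1]$ are trivial. This uses the hypothesis $g\ge 1$ in an essential way, since otherwise there is no commutator slot in which to absorb the element of $\BS(1,n)'$.

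For the complexity claim, I would observe that deciding \eqref{eq:comm-eq} amounts to summing up the $a$-exponents and $t$-exponents of the constants $c_i$ appearing in $W$, then checking whether the $t$-sum is zero and the $a$-sum is divisible by $n-1$. Since the constants are subwords of $W$ and the exponent sums are additive, this is a single linear scan of the input word, hence $O(|W|)$. The only subtlety is arithmetic on the resulting integers, but their absolute values are at most $|W|$, so the checks are constant-time on a RAM model (or at worst logarithmic, which is still within $O(|W|)$). The main conceptual point—and what I expect to be essentially the only content—is the application of commutator width $1$; the rest is bookkeeping.
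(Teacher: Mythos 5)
Your proposal is correct and follows essentially the same route as the paper: reduce to membership of $\prod_{i=1}^k c_i$ in the derived subgroup (noting conjugation-invariance of the abelianized image), characterize that membership via Lemma \ref{le:derived-sbgp-conditions}, and use the commutator width $1$ from Lemma \ref{le:comwidth} to produce an explicit solution with $x_1,y_1$ carrying the commutator and all other variables trivial. The extra detail you supply (inverting $\prod c_i$, the linear scan for exponent sums) is just an expansion of the paper's terser argument.
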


\begin{proof}
It is easy to see that the conditions of Lemma 
\ref{le:derived-sbgp-conditions} are satisfied by $\prod_{i=1}^k c_i$ if and only if they are satisfied by $\prod_{i=1}^k z_i^{-1}c_iz_i$ (for any choice of $z_i$). Hence, 
it follows from Lemma \ref{le:comwidth} that
$W=1$ has a solution if and only if
$\prod_{i=1}^k c_i$ belongs to the derived subgroup, which is true if and only if the conditions
\eqref{eq:comm-eq} hold.
These conditions can obviously be checked in linear time.
\end{proof}

\begin{proof}[Proof of theorem \ref{th:solution-bound} for orientable equations of genus $g\ge 1$]

If $W = 1$ is an equation of the form \eqref{eq:orientable-eq}
that has a solution, then $\prod_{i=1}^k c_i
\in 
\BS(1,n)'$, and we have $\prod_{i=1}^k c_i = t^{p}a^{K(n-1)}t^{-p}$ for some $p,K\in \MZ$ satisfying bounds
$$|K(n-1)|<|W||n|^{|W|} \ \ \ \mbox{and} \ \ \ 0 \leq p \leq |W|$$ by Lemma \ref{le:standard-word}. This yields a solution with $x_1 = t$, $y_1 = a^{-K}t^{-p}$ and all other variables equal to 1. From the bounds on $|K|$ and $p$, 
it follows $a^{-K}t^{-p}$ can be expressed as a word of length $O(|W|)$,
completing the proof for this case.
\end{proof}

\subsection{Non-orientable equations}\label{subsec:nonorientable}

In this section, we prove that the Diophantine problem 
for non-orientable quadratic
equations can be solved in linear time
for genus $g\ge 2$, and that it is $\NP$-complete for genus $g=1$ whenever $|n|>1$. We first consider the genus 1 case.
\subsubsection{Genus $g=1$}
The following is a nonorientable version of Proposition \ref{pr:spherical-solution}.

\begin{proposition}\label{pr:non-orientable-g1-solution}
Let $c_i=(\alpha_i,\beta_i)\in \MZ[\tfrac{1}{n}] \rtimes \MZ$ \  for \ $i=1,\ldots,k$, let $\beta = \gcd (|\beta_1|,\ldots,|\beta_k|)$ and let $\beta_x = -\frac{1}{2}\sum_{i=1}^k \beta_i$.
Then the equation
\begin{equation}\label{eq:le-nonorient}
x^2\prod_{i=1}^k z_i^{-1}c_iz_i = 1
\end{equation}
has a solution if and only if the following conditions hold
\begin{enumerate}[(i.)]
    \item \label{cond:nonorient-beta}
    $\sum_{i=1}^k\beta_i \equiv 0 \ (\mbox{mod }2)$

    \item \label{cond:nonorient-alpha}
    there exist $x_1,
    \ldots,x_k\in \MZ$ such that $\alpha_1n^{x_1}+\cdots + \alpha_k n^{x_k} \equiv 0 \ (\mbox{mod }K)$,
\end{enumerate} 
 where $K = \gcd(n^{|\beta_x|}+1,n^{\beta}-1)$. Moreover, if \eqref{eq:le-nonorient} has a solution, then it has a solution that is $O(|W|^3)$.
\end{proposition}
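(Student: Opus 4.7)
The plan is to mirror the proof of Proposition~\ref{pr:spherical-solution}, with the additional contribution coming from $x^{2}$. Write $x=(v,\beta_x')$ and $z_i=(v_i,y_i)$ in the semidirect product notation. Using $(a,b)(c,d)=(a+n^{-b}c,\,b+d)$, compute
$$
x^{2}=\bigl(v(1+n^{-\beta_x'}),\,2\beta_x'\bigr),\qquad z_i^{-1}c_iz_i=\bigl(n^{y_i}(\alpha_i+v_i(n^{-\beta_i}-1)),\,\beta_i\bigr),
$$
the second expression being the same one used in the spherical case. Multiplying everything and reading off the $t$-exponent forces $2\beta_x'+\sum\beta_i=0$, which gives condition \eqref{cond:nonorient-beta} and pins down $\beta_x'=\beta_x$. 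Reading off the $a$-component and clearing the common power of $n$ produces, after the substitution $x_i:=y_i-\sum_{j<i}\beta_j + L$ (with $L$ large enough to clear denominators) and the $v_i'$ change of variables used in Proposition~\ref{pr:spherical-solution}, an equation of the form
$$
\sum_{i=1}^{k}\alpha_i n^{x_i}\;=\;v'\bigl(n^{|\beta_x|}+1\bigr)+\sum_{i=1}^{k}v_i'\bigl(n^{|\beta_i|}-1\bigr),
$$
with $v',v_i'\in\MZ$. Combined with the identity $\gcd(n^{|\beta_1|}-1,\dots,n^{|\beta_k|}-1)=n^{\beta}-1$ from the spherical argument, this yields $K\mid\sum\alpha_i n^{x_i}$, which is condition \eqref{cond:nonorient-alpha}.

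For sufficiency, assume \eqref{cond:nonorient-beta} and \eqref{cond:nonorient-alpha} hold, and let $x_1,\dots,x_k\in\MZ$ be integers witnessing \eqref{cond:nonorient-alpha}. Apply Lemma~\ref{le:bezout} to the generators $\{n^{|\beta_x|}+1\}\cup\{n^{|\beta_i|}-1\}$ to obtain integers $s_0,s_1,\dots,s_k$ with $\sum\alpha_in^{x_i}=h\bigl(s_0(n^{|\beta_x|}+1)+\sum_{i=1}^k s_i(n^{|\beta_i|}-1)\bigr)$ for an appropriate integer $h$, where $|s_i|<|n|^{|W|}$ and $|h|<|n|^{|W|}$ by Lemma~\ref{le:semidirect-bound}. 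Setting $v=hs_0$ (adjusted by the correct sign/power of $n$, as in Case~2 of Proposition~\ref{pr:spherical-solution}) and defining the $v_i$ analogously yields a solution to \eqref{eq:le-nonorient}. As in the spherical proof, the degenerate cases ($\beta=0$, $\beta_x=0$, or $n=-1$ with $\beta$ odd) are handled separately: if $\beta=0=\beta_x$ the equation reduces to an instance of Theorem~\ref{th:Semenov-NP}, which gives $O(|W|^2)$ bounds on the $x_i$; the $n=-1$ subcase and the subcase $\beta_x=0$, $\beta\neq 0$ admit direct constructions using the corresponding one-coefficient trick.

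Finally, the size bound follows by plugging the bounds $|s_i|,|h|<|n|^{|W|}$ into Lemma~\ref{le:log-length}, which shows each component of the solution can be expressed as a word of length $O(|W|)$ in $\{a,t\}$; the total solution size is thus $O(k|W|)=O(|W|^2)$, which is well within the $O(|W|^3)$ target (the cubic bound is only needed in the $\beta=0$ subcase, where Theorem~\ref{th:Semenov-NP} provides exponents of magnitude $O(|W|^2)$ encoded in $O(|W|^3)$ bits total).

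The main obstacle will be the bookkeeping around the extra relator $n^{|\beta_x|}+1$: one needs to verify that the Bézout combination can be transported back through the substitution $v_i'\leftrightarrow v_i$ while respecting that $v\in\MZ[\tfrac 1n]$ rather than $\MZ$, and to treat cleanly the degenerate parameter values where $n^{|\beta_x|}+1$ either vanishes or coincides (up to sign) with some $n^{|\beta_i|}-1$. These are exactly the same kinds of case analyses that appeared in Proposition~\ref{pr:spherical-solution}, so no new ideas are required beyond careful case splitting.
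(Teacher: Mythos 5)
Your overall strategy is the paper's: compute in $\MZ[\tfrac1n]\rtimes\MZ$ to extract the $t$-exponent condition and the first-component equation, normalize with the $v_i'$ substitution to reach
$\sum_i\alpha_in^{x_i}=v'(n^{|\beta_x|}+1)+\sum_iv_i'(n^{|\beta_i|}-1)$, and then run the converse via Lemma~\ref{le:bezout} together with $\gcd(n^{|\beta_1|}-1,\dots,n^{|\beta_k|}-1)=n^\beta-1$. The necessity direction and the main Bézout case ($|n|>1$, some $\beta_i\neq 0$) match the paper, as do the size bounds there.

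However, your dispatch of the degenerate cases is wrong in one concrete place. When $|n|>1$ and $\beta=\beta_x=0$ (all $\beta_i=0$), you claim the problem ``reduces to an instance of Theorem~\ref{th:Semenov-NP},'' i.e.\ to the exact equation $\sum_i\alpha_in^{x_i}=0$. But in this case $n^{|\beta_x|}+1=2$, so $K=\gcd(2,0)=2$ and condition~(ii) is only a congruence mod $2$; the correct construction (the paper's Case~3) absorbs the even sum into the $x$-variable by setting $\alpha_x'=\tfrac12\sum_i\alpha_in^{x_i}$. Demanding the exact equation would reject solvable instances: for $k=1$, $c_1=a^2$, the equation $x^2z_1^{-1}a^2z_1=1$ is solved by $x=a^{-1}$, $z_1=1$, yet $2n^{x_1}=0$ has no solution. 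The situation where an exact exponential equation genuinely appears (and Theorem~\ref{th:Semenov-NP}-type bounds are needed) is $K=0$, which for this proposition occurs only when $n=-1$, $\beta_x$ is odd and $\beta$ is even --- a case your list of degenerate parameter values does not isolate (you mention ``$n=-1$ with $\beta$ odd,'' where in fact $K=2$ and the one-coefficient trick applies). So the case split needs to be reorganized around the value of $K$ ($K=0$ versus $K=2$ versus the Bézout case) rather than around $\beta=0$; once that is done the argument goes through as in the paper.
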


\begin{proof} Suppose $x=(\alpha_x,\beta_x)$, $z_i=(v_i,y_i)$ is a solution to \eqref{eq:le-nonorient}. As in the proof of Lemma \ref{pr:spherical-solution},
we have by direct computation 
\begin{equation*}\label{eq:alpha-beta-nonorient}
\rb{
\alpha_x (n^{-\beta_x}+1)+
n^{-2\beta_x}\sum_{i=1}^k n^{-\sum_{j=1}^{i-1}\beta_j} 
n^{y_i}(\alpha_i+v_i(n^{-\beta_i}-1))
,2\beta_x+\sum_{i=1}^k\beta_i} = (0,0),
\end{equation*}
which implies the necessity of condition \eqref{cond:nonorient-beta} and shows that $\beta_x=-\frac{1}{2}\sum_{i=1}^k \beta_i$. Substituting into the first component, we obtain

\begin{equation*}\label{eq:alpha-beta-comp1}
\alpha_x (n^{-\beta_x}+1)+
\sum_{i=1}^k n^{\sum_{j=i}^k\beta_j} 
n^{y_i}(\alpha_i+v_i(n^{-\beta_i}-1))
 = 0.
\end{equation*}
Let $v_i'$ be defined as in Proposition \ref{pr:spherical-solution}, let
$$
\alpha_x' = \begin{cases}
    -n^{-\beta_x}\alpha_x 
    \mbox{ if }\beta_x > 0 \\
    -\alpha_x 
    \mbox{ otherwise}
    \end{cases}
$$
and let $L$ be chosen so that $n^L\alpha_x'\in \MZ$ and $n^{L+y_i+\sum_{j=i}^{k}\beta_j}v_i' \in \MZ$ for $i = 1,\ldots,k$. Then, setting $x_i = L + y_i + \sum_{j=i}^{k}\beta_j$, we have
\begin{equation}\label{eq:bigL}
\alpha_1n^{x_1}+\cdots + \alpha_k n^{x_k} = n^L\alpha_x'(n^{|\beta_x|}+1)+\sum_{i=1}^k n^{x_i}v_i'(n^{|\beta_i|}-1).
\end{equation}
By Lemma \ref{le:easy-fact}, this shows that \eqref{cond:nonorient-alpha} holds.

Conversely, suppose that \eqref{beta-cond} and \eqref{alpha-cond} hold, so there are integers $x_1,
\ldots,x_k$ such that 
\begin{equation}\label{eq:modulus}
 \alpha_1n^{x_1}+\cdots + \alpha_k n^{x_k} \equiv 0 \ (\mbox{mod } K)
\end{equation}
where $K = \gcd(n^{|\beta_x|}+1, n^{\beta}-1).$ We consider the following two cases.
\setcounter{case}{0}

\begin{case}
Suppose that $\beta_1=\cdots=\beta_k=0$ (so that $\beta=\beta_x=0$). Then $K=n^{|\beta_x|}+1 = 2$, and we may solve \eqref{eq:bigL} by setting $\alpha_x' = \frac{1}{2}\sum_{i=1}^k \alpha_i n^{x_i}$ and $v_1' = \cdots = v_k' = 0$. Following the proof of Proposition \ref{pr:spherical-solution} we obtain, in this way, a solution that is $O(|W|^3)$.
\end{case}
\begin{case}
Suppose that at least one of the $
\beta_i$ is nonzero, so that $\beta \neq 0$. Then by Lemmas \ref{le:bezout} and \ref{le:easy-fact}, there are $s_x, s_1,\ldots,s_k, h\in \MZ$ such that 
\begin{equation*}
\alpha_1 n^{x_1}+
\cdots + \alpha_k n^{x_k} = h\rb{s_x(n^{\beta_x}+1)+\sum_{i=1}^k s_i (n^{|\beta_i|}-1)}
\end{equation*}
with $|s_x|,|s_i|< |n|^{|\beta_x|}+1+\sum_{i=1}^k (|n|^{|\beta_i|}+1)< k+1+\prod_{i=1}^k |n|^{|\beta_i|} \leq |W| + |n|^{|W|}$. We obtain the same bound for $h$ as in Case 2 of the proof of Proposition \ref{pr:spherical-solution}, and a similar argument yields a solution that is $O(|W|^2)$. 
\end{case}

\end{proof}

\begin{corollary}\label{co:meta-nonorient-complete}
The Diophantine problem for nonorientable equations of genus 1
\begin{equation*}
W = x^2\prod_{i=1}^k z_i^{-1}c_iz_i = 1
\end{equation*}
over $\BS(1,n)$ is $\NP$-complete. Moreover, this class of equations satisfies Theorem \ref{th:solution-bound}.
\end{corollary}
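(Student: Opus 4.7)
The plan is to assemble the corollary from two already-established pieces: the $\NP$-hardness obtained in Section \ref{se:np-hard} and the structural analysis carried out in Proposition \ref{pr:non-orientable-g1-solution}.

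For the lower bound, I would simply appeal to the corollary stated immediately after Proposition \ref{pr:non-orient1}, which gives a polynomial-time reduction from the strongly $\NP$-complete $3$-partition problem to the Diophantine problem for nonorientable equations of genus~$1$ over $\BS(1,n)$ with $|n|>1$ via the equation \eqref{eq:gen1reduction}. This already establishes strong $\NP$-hardness.

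For membership in $\NP$, my plan is to use a solution tuple itself as the certificate. Proposition \ref{pr:non-orientable-g1-solution} guarantees that whenever $W=1$ is solvable there is a solution $(x,z_1,\ldots,z_k)$ of total word length $O(|W|^3)$, so a certificate of size polynomial in $|W|$ is available. To verify a candidate, one substitutes the proposed group elements into $W$ and checks, using the polynomial-time word problem in $\BS(1,n)$ (via Britton's lemma, or equivalently by passing to the semidirect product representation as in Remark \ref{re:switching} and comparing the two components), that the resulting element is trivial. Both the substitution and the word-problem check run in time polynomial in $|W|$, placing the problem in $\NP$ and hence, combined with the first paragraph, proving $\NP$-completeness.

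The second assertion of the corollary, that this class of equations satisfies Theorem \ref{th:solution-bound}, is literally the final sentence of Proposition \ref{pr:non-orientable-g1-solution}, so it requires no further argument. In short, the entire substantive work has already been done inside Proposition \ref{pr:non-orientable-g1-solution}, where the cubic bound on solution size was extracted from a case analysis on $n$, $\beta$, and $\beta_x$; the corollary is simply the packaging of that proposition together with the prior $\NP$-hardness reduction, and I do not foresee any real obstacle beyond verifying that the cited polynomial-time word problem algorithm handles constants encoded in the input format we use (which is exactly the content of Remark \ref{re:switching}).
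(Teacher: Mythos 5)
Your proposal is correct and matches the paper's (implicit) argument: the corollary is derived exactly as you describe, from the strong $\NP$-hardness established in the corollary following Proposition \ref{pr:non-orient1} together with the solvability criterion and $O(|W|^3)$ solution bound of Proposition \ref{pr:non-orientable-g1-solution}. The only cosmetic difference is the choice of $\NP$-certificate --- by analogy with Corollary \ref{th:meta-spherical-complete}, the paper intends the solvability conditions of Proposition \ref{pr:non-orientable-g1-solution} (i.e., a bounded exponent tuple witnessing the congruence) to serve as the certificate, whereas you use the polynomially bounded solution itself, a route the introduction explicitly endorses as equally valid.
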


\subsubsection{Genus $g\ge2$}

\begin{proposition}\label{pr:2squares}
The nonorientable equation 
\begin{equation}\label{eq:nonorientable-genus2}
W = x_1^2\cdots x_g^2\prod_{i=1}^k z_i^{-1}c_iz_i = 1
\end{equation}
of genus $g\ge 2$
with unknowns $x_i,z_i$ has a solution in $\BS(1,n)$ if and only if 
the following holds
\begin{equation}\label{eq:nonorientable-genus2-condition}
\sigma_t\rb{\prod_{i=1}^k c_i} \mbox{ is even }
\ \wedge\ 
\left[
n \mbox{ is even }
\ \vee\ 
\sigma_a\rb{\prod_{i=1}^k c_i} \mbox{ is even }
\right].
\end{equation}
\end{proposition}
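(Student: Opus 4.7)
The plan is to recognize that \eqref{eq:nonorientable-genus2-condition} is precisely the membership criterion \eqref{eq:verbwidth} applied to $\prod_{i=1}^k c_i$, and then translate ``$W=1$ has a solution'' into ``$\prod_{i=1}^k c_i \in S$'' by leveraging two facts already in hand: that $\BS(1,n)' \subseteq S = \gp{x^2 \mid x\in\BS(1,n)}$, and that $S$ has verbal width $2$ (Lemma \ref{le:square-verbal-width}).

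For the necessity direction, I would argue that if $(x_1,\ldots,x_g, z_1,\ldots,z_k)$ solves \eqref{eq:nonorientable-genus2}, then
$$x_1^2 \cdots x_g^2 = \Bigl(\prod_{i=1}^k z_i^{-1} c_i z_i\Bigr)^{-1},$$
so the right-hand side lies in $S$. Because $\BS(1,n)' \subseteq S$, each conjugate $z_i^{-1} c_i z_i$ is congruent to $c_i$ modulo $S$, hence $\prod_{i=1}^k c_i \in S$. Applying \eqref{eq:verbwidth}, this is equivalent to $2 \mid \sigma_t(\prod c_i)$ together with $\gcd(2,n-1) \mid \sigma_a(\prod c_i)$; since $\gcd(2,n-1)=1$ when $n$ is even and $2$ when $n$ is odd, this is precisely \eqref{eq:nonorientable-genus2-condition}.

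For sufficiency, I would assume \eqref{eq:nonorientable-genus2-condition}, so that \eqref{eq:verbwidth} gives $\prod_{i=1}^k c_i \in S$, hence $(\prod_{i=1}^k c_i)^{-1} \in S$. By Lemma \ref{le:square-verbal-width} I can write $(\prod_{i=1}^k c_i)^{-1} = y_1^2 y_2^2$ for some $y_1, y_2 \in \BS(1,n)$. Exploiting $g \ge 2$, I then assign $x_1 = y_1$, $x_2 = y_2$, $x_3 = \cdots = x_g = 1$ and $z_1 = \cdots = z_k = 1$; substituting yields $y_1^2 y_2^2 \cdot \prod c_i = 1$, confirming a valid solution.

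I do not expect any substantive obstacle: the proposition is essentially a direct corollary of the verbal-width machinery built up earlier. The only point requiring care is the passage modulo $S$ in the necessity direction, which is valid precisely because the derived subgroup sits inside $S$. As a bonus, the same argument yields linear-time decidability (checking \eqref{eq:nonorientable-genus2-condition} requires only computing $\sigma_t$ and $\sigma_a$ of the product of constants), and pairing the construction from the proof of Lemma \ref{le:square-verbal-width} with the length bound of Lemma \ref{le:log-length} will furnish the solution size promised by Theorem \ref{th:solution-bound} in this case.
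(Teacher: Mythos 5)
Your proof is correct and follows essentially the same route as the paper: reduce solvability of $W=1$ to membership of $\prod_{i=1}^k c_i$ in the verbal subgroup $S$ of squares (using $\BS(1,n)'\subseteq S$ for necessity and the verbal width $2$ of Lemma \ref{le:square-verbal-width}, together with $g\ge 2$, for sufficiency), then apply the criterion \eqref{eq:verbwidth}. The paper's proof is just a more compressed version of the same argument.
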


\begin{proof} 
Let $S=\gp{x^2 \mid x\in\BS(1,n)}$ be the verbal subgroup generated by squares. Since $S$ contains the derived subgroup $\BS(1,n)'$, it follows that $\prod_{i=1}^k z_i^{-1}c_iz_i \in S$ if and only if $\prod_{i=1}^k c_i \in S$. Hence, by Lemma \ref{le:square-verbal-width}, $W=1$ has a solution if and only if $\prod_{i=1}^k c_i \in S$. Following \eqref{eq:verbwidth}, this is true if and only if the conditions \eqref{eq:nonorientable-genus2-condition} hold.
\end{proof}
\begin{corollary}\label{co:genus-geq2}
    The Diophantine problem for nonorientable equations of genus $g \geq 2$ in $\BS(1,n)$ is decidable in linear time. If a solution exists, then there is a solution of size $O(|W|)$.
\end{corollary}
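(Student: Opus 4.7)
The plan is to extract a linear-time decision procedure directly from Proposition \ref{pr:2squares}. That proposition asserts that for $g\ge 2$, the equation
\[
W = x_1^2\cdots x_g^2 \prod_{i=1}^k z_i^{-1} c_i z_i = 1
\]
has a solution in $\BS(1,n)$ if and only if the two integers $\sigma_t\!\bigl(\prod_{i=1}^k c_i\bigr)$ and $\sigma_a\!\bigl(\prod_{i=1}^k c_i\bigr)$ satisfy the parity conditions \eqref{eq:nonorientable-genus2-condition}. My strategy is therefore simply to compute these two exponent sums from the input and test the conditions.

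Concretely, given $W$ as a freely reduced word of length $|W|$ over the alphabet $\{a^{\pm 1}, t^{\pm 1}\} \cup \{x_j^{\pm 1}, z_i^{\pm 1}\}$, I would make a single left-to-right scan through $W$, maintaining two accumulators: one adds $\pm 1$ at each occurrence of $a^{\pm 1}$, and the other adds $\pm 1$ at each occurrence of $t^{\pm 1}$; variable letters are skipped. Because $\sigma_a$ and $\sigma_t$ are homomorphisms from the free monoid on $\{a^{\pm 1}, t^{\pm 1}\}$ to $\MZ$, the final contents of the accumulators are precisely $\sigma_a\!\bigl(\prod_i c_i\bigr)$ and $\sigma_t\!\bigl(\prod_i c_i\bigr)$. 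In fact, since only parities are needed, we may keep the accumulators modulo $2$, and the scan uses $O(|W|)$ bit operations.

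Having computed the two parities, the algorithm checks in constant time the condition that $\sigma_t\!\bigl(\prod c_i\bigr)$ is even together with the disjunction ``$n$ is even, or $\sigma_a\!\bigl(\prod c_i\bigr)$ is even,'' and accepts if and only if both hold; correctness is immediate from Proposition \ref{pr:2squares}. There is no real obstacle here beyond bookkeeping; the only mildly subtle point is to note that we may assume $W$ is already in the standard form \eqref{eq:nonorientable} without any input blow-up, since, as recalled in Section \ref{se:prelim}, the reduction to standard form does not increase word length.
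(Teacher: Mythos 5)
Your proposal is correct and is exactly the intended argument: the corollary follows from Proposition \ref{pr:2squares} because the conditions \eqref{eq:nonorientable-genus2-condition} only involve the parities of $\sigma_a\bigl(\prod_i c_i\bigr)$ and $\sigma_t\bigl(\prod_i c_i\bigr)$, which a single linear scan of $W$ computes (modulo $2$), with the variable letters contributing nothing to either exponent sum. The paper leaves this verification implicit, and your bookkeeping matches it.
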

Note that Theorem \ref{th:meta-general-complete} follows from Corollaries \ref{co:spherical-NP-hard}, \ref{th:meta-spherical-complete}, \ref{co:meta-nonorient-complete} and \ref{co:genus-geq2}, together with Proposition \ref{prop:comm-eq} and Theorem \ref{th:gen-np-hard}.

\bibliography{main_bibliography}

\end{document}